\newcommand{\nc}{\newcommand}
\nc{\RS}{\operatorname{RicSign}} \nc{\REV}{\operatorname{RicEV}}
\nc{\Gl}{\mathsf{GL}} \nc{\Or}{\mathsf{O}}  \nc{\SO}{\mathsf{SO}}  \nc{\SU}{\mathsf{SU}}  \nc{\Sl}{\mathsf{SL}} \nc{\Sp}{\mathsf{Sp}}
\nc{\G}{\mathsf{G}} \nc{\K}{\mathsf{K}}  \nc{\T}{\mathsf{T}} \nc{\Lsf}{\mathsf{L}}
\nc{\Qb}{\mathsf{Q}_\Beta} \nc{\Hb}{\mathsf{H}_\Beta} \nc{\Ub}{\mathsf{U}_\Beta}
\nc{\Gb}{\mathsf{G}_\Beta} \nc{\Kb}{\mathsf{K}_\Beta}
\nc{\PPP}{\mathsf{P}} \nc{\U}{\mathsf{U}} \nc{\N}{\mathsf{N}} \nc{\Ss}{\mathsf{S}} \nc{\Aa}{\mathsf{A}}
\nc{\laH}{\la\!\la} \nc{\raH}{\ra\!\ra}
\nc{\ipH}{{\laH \cdot, \cdot \raH}}
\nc{\Vg}{{V(\ggo)}}
\nc{\alert}{\color{blue}}
\nc{\fg}{\mathfrak{f}}  \nc{\vg}{\mathfrak{v}} \nc{\wg}{\mathfrak{w}} \nc{\zg}{\mathfrak{z}} \nc{\ngo}{\mathfrak{n}} \nc{\kg}{\mathfrak{k}} \nc{\mg}{\mathfrak{m}} \nc{\bg}{\mathfrak{b}} \nc{\ggo}{\mathfrak{g}} \nc{\ggob}{\overline{\mathfrak{g}}} \nc{\sog}{\mathfrak{so}} \nc{\sug}{\mathfrak{su}} \nc{\spg}{\mathfrak{sp}} \nc{\slg}{\mathfrak{sl}} \nc{\glg}{\mathfrak{gl}} \nc{\cg}{\mathfrak{c}} \nc{\rg}{\mathfrak{r}}  \nc{\hg}{\mathfrak{h}} \nc{\tgo}{\mathfrak{t}} \nc{\ug}{\mathfrak{u}} \nc{\dg}{\mathfrak{d}} \nc{\ag}{\mathfrak{a}} \nc{\pg}{\mathfrak{p}} \nc{\sg}{\mathfrak{s}} \nc{\affg}{\mathfrak{aff}} \nc{\qg}{\mathfrak{q}}
\nc{\Xg}{\mathfrak{X}} \nc{\lgo}{\mathfrak{l}}
\nc{\pca}{\mathcal{P}} \nc{\nca}{\mathcal{N}} \nc{\lca}{\mathcal{L}} \nc{\oca}{\mathcal{O}} \nc{\mca}{\mathcal{M}} \nc{\tca}{\mathcal{T}} \nc{\aca}{\mathcal{A}} \nc{\cca}{\mathcal{C}} \nc{\gca}{\mathcal{G}} \nc{\sca}{\mathcal{S}} \nc{\hca}{\mathcal{H}} \nc{\bca}{\mathcal{B}} \nc{\dca}{\mathcal{D}}
\nc{\vp}{\varphi} \nc{\ddt}{\tfrac{{\rm d}}{{\rm d}t}} \nc{\dds}{\tfrac{{\rm d}}{{\rm d}s}} \nc{\ddtbig}{\frac{{\rm d}}{{\rm d}t}} \nc{\dd}{{\rm d}}
\nc{\dpar}{\tfrac{\partial}{\partial t}} \nc{\im}{\mathtt{i}}
\nc{\RR}{{\mathbb R}} \nc{\HH}{{\mathbb H}} \nc{\CC}{{\mathbb C}} \nc{\ZZ}{{\mathbb Z}}
\nc{\FF}{{\mathbb F}} \nc{\NN}{{\mathbb N}} \nc{\QQ}{{\mathbb Q}} \nc{\PP}{{\mathbb P}}
\nc{\vs}{\vspace{.2cm}} \nc{\vsp}{\vspace{1cm}} \nc{\ip}{{\langle\cdot,\cdot\rangle}}
\nc{\ipp}{(\cdot,\cdot)} \nc{\la}{\langle} \nc{\ra}{\rangle} \nc{\unm}{\tfrac{1}{2}}
\nc{\unc}{\tfrac{1}{4}} \nc{\und}{\tfrac{1}{16}} \nc{\no}{\vs\noindent}
\nc{\lam}{\Lambda^2(\RR^n)^*\otimes\RR^n} \nc{\tangz}{{\rm T}^{\rm Zar}}
\nc{\lamg}{\Lambda^2\ggo^*\otimes\ggo}
\nc{\nor}{{\sf n}}  \nc{\mum}{/\!\!/} \nc{\kir}{/\!\!/\!\!/}
\nc{\Ri}{\tfrac{4\Ric_{\mu}}{||\mu||^2}} \nc{\ds}{\displaystyle}
\nc{\lb}{[\cdot,\cdot]} \nc{\isn}{\tfrac{1}{||v||^2}}
\nc{\gkp}{(\ggo=\kg\oplus\pg,\ip)} \nc{\ukh}{(\ug=\kg\oplus\hg,\ip)}
\nc{\tgkp}{(\tilde{\ggo}=\kg\oplus\pg,\ip)}
\nc{\wt}{\widetilde}
\nc{\raw}{\rightarrow} \nc{\lraw}{\longrightarrow} \nc{\hqn}{\mathcal{H}_{q,n}}
\nc{\Spec}{\operatorname{Spec}} \nc{\Nat}{\operatorname{nat}}
\nc{\ad}{\operatorname{ad}} \nc{\rk}{\operatorname{rk}} \nc{\Aut}{\operatorname{Aut}}   \nc{\Inn}{\operatorname{Inn}}   \nc{\Lie}{\operatorname{Lie}} \nc{\Ad}{\operatorname{Ad}} \nc{\Der}{\operatorname{Der}} \nc{\rad}{\operatorname{r}} \nc{\kf}{\operatorname{B}}
\nc{\End}{\operatorname{End}} \nc{\rank}{\operatorname{rank}} \nc{\Ker}{\operatorname{Ker}} \nc{\tr}{\operatorname{tr}} \nc{\sym}{\operatorname{sym}} \nc{\diag}{\operatorname{diag}} \nc{\proy}{\operatorname{pr}} \nc{\Adj}{\operatorname{Adj}} \nc{\vspan}{\operatorname{span}}
\nc{\Hess}{\operatorname{Hess}}  \nc{\dif}{\operatorname{d}} \nc{\sen}{\operatorname{sen}} \nc{\grad}{\operatorname{grad}} \nc{\Order}{\operatorname{O}} \nc{\divg}{\operatorname{div}}
\nc{\Iso}{\operatorname{Iso}} \nc{\Diff}{\operatorname{Diff}} \nc{\ricci}{\operatorname{ric}}  \nc{\Rc}{\operatorname{Rc}} \nc{\Ricci}{\operatorname{Ric}} \nc{\Riem}{\operatorname{Rm}} \nc{\scalar}{\operatorname{sc}} \nc{\scalarm}{\hat{\operatorname{R}}} \nc{\riccim}{\widehat{\operatorname{Ric}}} \nc{\tang}{\operatorname{T}} \nc{\vol}{\operatorname{vol}}
\nc{\mm}{\operatorname{M}} \nc{\CH}{\operatorname{CH}} \nc{\Irr}{\operatorname{Irr}} \nc{\mcc}{\operatorname{mcc}} \nc{\m}{\operatorname{m}} \nc{\pr}{\operatorname{pr}}
\nc{\Id}{\operatorname{Id}}  \nc{\mmm}{\operatorname{m}}
\numberwithin{equation}{section}
\theoremstyle{plain}
\newtheorem{theorem}{Theorem}[section]
\newtheorem{proposition}[theorem]{Proposition}
\newtheorem{corollary}[theorem]{Corollary}
\newtheorem{lemma}[theorem]{Lemma}
\theoremstyle{definition}
\theoremstyle{remark}
\newtheorem{remark}[theorem]{Remark}
\newtheorem{example}[theorem]{Example}
\author{Romina M. Arroyo\thanks{FaMAF $\&$ CIEM, Universidad Nacional de C\'ordoba, C\'ordoba, Argentina}~\thanks{School of Mathematics and Physics, The University of Queensland, St~Lucia,~QLD, Australia}~\thanks{Research supported by the Australian Government through the Australian Research Council's Discovery Projects funding scheme (DP180102185).} \\
\small{\texttt{arroyo@famaf.unc.edu.ar}} \and Mark D. Gould\footnotemark[2]~\footnotemark[3] \\ \small{\texttt{m.gould1@uq.edu.au}} \and Artem Pulemotov\footnotemark[2]~\footnotemark[3] \\
\small{\texttt{a.pulemotov@uq.edu.au}}}
\title{The prescribed Ricci curvature problem for naturally reductive metrics on non-compact simple Lie groups}
\begin{document}

\maketitle

\begin{abstract}
We investigate the prescribed Ricci curvature problem in the class of left-invariant naturally reductive Riemannian metrics on a non-compact simple Lie group. We obtain a number of conditions for the solvability of the underlying equations and discuss several examples.
\end{abstract}

\section{Introduction}

The study of the prescribed Ricci curvature problem is an important part of modern geometry with ties to flows, relativity and other subjects. The first wave of interest in this problem came in the 1980s; see~\cite[Chapter~5]{Bss} and~\cite[Section~6.5]{TA98}. Particularly extensive contributions were made at that time by DeTurck and his collaborators. For a discussion of the subsequent advances, including the recent progress in the framework on homogeneous spaces, see the survey~\cite{BP19}.

Let $M$ be a smooth manifold. In its original interpretation, the prescribed Ricci curvature problem comes down to the equation
\begin{align}\label{PRC_no_c}
\Ricci (g) = T,
\end{align}
where the Riemannian metric $g$ on $M$ is the unknown and the (0,2)-tensor field $T$ is given. The paper~\cite{DeTurck} 
proved, for nondegenerate~$T$, the existence of $g$ satisfying this equation in a neighbourhood of a point on~$M$; see also~\cite{DeTGold,AP13,AP16b}. However, subsequent research into the solvability of~\eqref{PRC_no_c} on \emph{all} of $M$ revealed the need for a more nuanced interpretation of the prescribed Ricci curvature problem. Specifically, suppose $M$ is closed and $T$ is positive-definite. The results of~\cite{Hamilton84,DeTurck85,Del03} and other papers suggest that there exists at most one $c\in\mathbb R$ such that the equation 
\begin{align}\label{PRC_c}
\Ricci (g) = cT
\end{align}
can be solved for $g$ on all of~$M$. This is certainly the case if $M$ is the 2-dimensional sphere and $T$ is positive-definite; see~\cite{WW70,DeTurck85,Hamilton84}. Thus, on a closed manifold, one customarily interprets the prescribed Ricci curvature problem as the question of finding $g$ and $c$ such that~\eqref{PRC_c} holds. This paradigm was originally proposed by DeTurck and Hamilton in~\cite{Hamilton84,DeTurck85}. As it turns out,~\eqref{PRC_c} arises in applications, such as the construction of the Ricci iteration; see~\cite{PR19,BPRZ19} and also~\cite[Section~3.10]{BP19}. On the other hand, if $M$ is open, it may be possible to obtain compelling existence theorems for~\eqref{PRC_no_c} without the additional constant~$c$. We refer to~\cite{Dela02,Dela18} for examples of such theorems.

In recent years, the third-named author and his collaborators produced a series of results, surveyed in~\cite{BP19}, on the prescribed Ricci curvature problem in the class of homogeneous metrics. More precisely, suppose $G$ is a connected Lie group. Let $M$ be a homogeneous space with respect to~$G$. Assume that the metric $g$ and the tensor field $T$ are $G$-invariant. Then \eqref{PRC_no_c} reduces to an overdetermined system of algebraic equations, whereas~\eqref{PRC_c} reduces to a determined one. For compact $M$ and positive-semidefinite $T$, the third-named author showed in~\cite{AP16} that homogeneous metrics satisfying~\eqref{PRC_c} are, up to scaling, critical points of the scalar curvature functional subject to the constraint~$\tr_gT=1$. This observation led to the discovery of several sufficient conditions for the solvability of~\eqref{PRC_c} in~\cite{AP16,MGAP18,AP20}. It parallels the well-known variational approach to the Einstein equation; see, e.g.,~\cite[\S1]{WZ86}. In the case where $M$ is compact but $T$ is not positive-semidefinite, the question of solvability of~\eqref{PRC_c} remains largely open. We hope that the present paper will stimulate its investigation; see Remark~\ref{rem_2c_Tim}.

As for the prescribed Ricci curvature problem for homogeneous metrics on \emph{non-compact} spaces, progress has been scarce so far. Buttsworth conducted in~\cite{TB19} a comprehensive study of this problem on unimodular three-dimensional Lie groups. In most of the cases he considered, there is at most one constant $c\in\mathbb R$ such that a metric $g$ satisfying~\eqref{PRC_c} exists. Several questions related to, but distinct from, the solvability of~\eqref{PRC_no_c} and~\eqref{PRC_c} on non-compact Lie groups have been studied by Milnor, Kowalski--Nikcevic, Eberlein, Kremlev--Nikonorov, Ha--Lee, Pina--dos Santos, the first-named author in collaboration with Lafuente (forthcoming work), and others. For a discussion of those results and a collection of references, see~\cite[Sections~2 and~4.1]{BP19}.

Left-invariant naturally reductive metrics on a Lie group form an important family nested between the set of all left-invariant metrics and the set of bi-invariant ones. The investigation of this family has led to several significant advances in geometry. For instance, it yielded new solutions to the Einstein equation and new insights into the spectral theory of the Laplacian on manifolds; see~\cite{DZ79,GS10,L19}. In the recent paper~\cite{APZ20}, Ziller and two of the authors studied~\eqref{PRC_c} for naturally reductive metrics on a compact Lie group using variational methods. That work exposed several interesting and previously unseen patterns of behaviour of the scalar curvature functional. For instance, one of the main theorems of~\cite{APZ20} is a necessary condition for the existence of a critical point subject to the constraint~$\tr_gT=1$. No results of this kind had appeared in the literature before.

The present paper studies the prescribed Ricci curvature problem for naturally reductive metrics on a \emph{non-compact} Lie group~$G$. We assume that $G$ is simple. The more general case of semisimple $G$ seems to be much more difficult analytically---we intend to consider it elsewhere. Since $G$ is an open manifold, it is reasonable for us to view the prescribed Ricci curvature problem as the question of finding solutions to~\eqref{PRC_no_c}. On the other hand, the fact that naturally reductive metrics are homogeneous suggests that a ``better" interpretation of this problem may be given by~\eqref{PRC_c}. The present paper studies both equations. We show that~\eqref{PRC_no_c} reduces to an overdetermined algebraic system. Even so, we are able to obtain a comprehensive existence and uniqueness theorem. Equation~\eqref{PRC_c} reduces to a determined system. In order to find conditions for solvability, we characterise metrics satisfying~\eqref{PRC_c} as critical points of the scalar curvature functional subject to one of three $T$-dependent constraints. While this characterisation is similar in spirit to the one obtained for compact Lie groups in~\cite{APZ20}, it bears some conceptual distinctions and requires a different proof. We obtain existence theorems for global maxima and classify some of the other critical points. The development and application of our variational methods presents many interesting analytical challenges and provides a wealth of insight for the investigation of~\eqref{PRC_c} on compact homogeneous spaces in the case of mixed-signature~$T$ (see, e.g.,~Remark~\ref{rem_2c_Tim}).

The paper is organised as follows. In Section~\ref{secnat}, we recall the characterisation, originally obtained by Gordon, of naturally reductive metrics on a non-compact simple Lie group. This characterisation underpins all of our results. In Section~\ref{secRicci}, we compute the Ricci curvature of a naturally reductive metric on~$G$. We believe that the formulas we obtain are of independent interest. Section~\ref{sec_Ricci=T} is devoted to equation~\eqref{PRC_no_c}. We produce a necessary and sufficient condition for the existence of a solution. We also establish uniqueness up to scaling. Section~\ref{sec_Ricci=cT} focuses on~\eqref{PRC_c}. We develop the variational approach to this equation and describe several types of critical points of the scalar curvature functional. At the end, we summarise the implications for the existence and the number of solutions. Section~\ref{sec_simple} examines the case where the metrics we consider are naturally reductive with respect to $G\times K$ for a simple subgroup~$K<G$. Here, we find conditions for the solvability of~\eqref{PRC_c} that are both necessary and sufficient. We also determine the precise number of solutions. Finally, Section~\ref{sec_examples} offers a series of examples.

\section{Naturally reductive metrics on non-compact simple Lie groups}\label{secnat}

Consider a connected non-compact simple Lie group $G$ with Lie algebra~$\ggo$. The results of~\cite[Section~5]{C85} yield a convenient characterisation of left-invariant naturally reductive metrics on~$G$. We present this characterisation in Theorem~\ref{G} below. For the basic theory of naturally reductive metrics, see~\cite[Section~1]{DZ79} and~\cite[Section~2]{C85}. In what follows, we identify every left-invariant (0,2)-tensor field on $G$ with the bilinear form it induces on~$\ggo$.

Let $K$ be a maximal compact subgroup of $G$ with Lie algebra~$\kg$. Suppose $B$ is the Killing form of~$\ggo$. Denote by $\pg$ the $B$-orthogonal complement of $\kg$ in~$\ggo$. Then
\begin{align*}
\ggo=\pg\oplus\kg
\end{align*}
is a Cartan decomposition. We have the inclusions
\begin{align*}
[\kg,\kg]\subset\kg,\qquad [\kg,\pg]\subset\pg,\qquad [\pg,\pg]\subset\kg.
\end{align*}
The Killing form $B$ is positive-definite on~$\pg$ and negative-definite on~$\kg$. Thus,
\begin{align*}
Q =B|_{\pg}-B|_{\kg}
\end{align*}
is an inner product on $\ggo$. Clearly, $Q$ is $\ad(\mathfrak k)$-invariant, and
\begin{equation}\label{sym_spce_incl}
Q([X,Y],Z)=-Q(X,[Y,Z]),\qquad X,Y \in \pg,~Z\in\kg.
\end{equation}

The quotient $G/K$ is a symmetric space. Because $G$ is simple, this space is irreducible. Consequently, the pair $(\ggo,\kg)$ must appear in Table~3 or~4 of~\cite[Section~7.H]{Bss}. Let $\kg_1,\ldots,\kg_r$ be the simple ideals of $[\kg,\kg]$. Denote by $\kg_{r+1}$ the centre of~$\kg$. Then
\begin{align}\label{dec_k}
\kg= \kg_1 \oplus \cdots \oplus \kg_{r+s},
\end{align}
where $s=0$ if $\kg_{r+1}$ is trivial and $s=1$ otherwise. Analysing the tables in~\cite[Section~7.H]{Bss}, we conclude that $\kg_{r+1}$ is at most 1-dimensional.

The direct product $G\times K$ acts on $G$ in accordance with the formula
\begin{align*}
(x,k)\,y=xyk^{-1},\qquad x,y\in G,~k\in K.
\end{align*}
The isotropy subgroup at the identity element of $G$ is 
\begin{align*}
\{(k,k)\in G\times K\,|\,k\in K\}.
\end{align*}
Denote by $\mathcal M_K$ the set of left-invariant metrics on $G$ that are naturally reductive with respect to~$G\times K$ and some decomposition of the Lie algebra of~$G\times K$. The main purpose of this paper is to study the prescribed Ricci curvature problem in~$\mathcal M_K$. Gordon showed in~\cite[Section~5]{C85} that every left-invariant naturally reductive metric on $G$ must lie in $\mathcal M_K$ for some choice of~$K$. Moreover, she obtained the following characterisation result.

\begin{theorem}[Gordon]\label{G}
A left-invariant metric $g$ on the simple group $G$ lies in $\mathcal M_K$ if and only if
\begin{equation}\label{metric}
g=\beta Q|_{\pg}+\alpha_1 Q|_{\kg_1}+\cdots+\alpha_{r+s}Q|_{\kg_{r+s}}
\end{equation}
for some $\beta,\alpha_1,\ldots,\alpha_{r+s}>0$.
\end{theorem}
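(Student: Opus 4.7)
The plan is to exploit the realisation $G \cong (G \times K)/\Delta K$, where $\Delta K = \{(k,k) : k \in K\}$ is the isotropy subgroup at the identity, and to classify the pairs $(\mg, \ip)$ consisting of an $\Ad(\Delta K)$-invariant reductive complement $\mg$ of $\Delta\kg$ in $\ggo \oplus \kg$ together with a compatible naturally reductive inner product $\ip$ on $\mg$.

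First, I would parametrise the complements: any $\Ad(\Delta K)$-invariant complement $\mg$ of $\Delta\kg$ in $\ggo \oplus \kg$ is the graph
\begin{equation*}
\mg = \{(X + f(X), f(X)) : X \in \ggo\}
\end{equation*}
of an $\Ad(K)$-equivariant linear map $f : \ggo \to \kg$. Under the identification $T_e G \cong \mg$ realised by $X \mapsto (X + f(X), f(X))$, any $\Ad(\Delta K)$-invariant inner product $\ip$ on $\mg$ pulls back to the left-invariant metric on $G$ given by $g(X,Y) = \la (X + f(X), f(X)),(Y + f(Y), f(Y)) \ra$ for $X, Y \in \ggo$.

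Second, I would apply Schur's lemma to the $\Ad(K)$-module decompositions $\ggo = \pg \oplus \kg_1 \oplus \cdots \oplus \kg_{r+s}$ and $\kg = \kg_1 \oplus \cdots \oplus \kg_{r+s}$, using that $\pg$ is $\Ad(K)$-irreducible because $G/K$ is an irreducible symmetric space. This confines $f$ and $\ip$ to a finite-dimensional family. Combined with the naturally reductive condition
\begin{equation*}
\la [Z, X]_\mg, Y \ra + \la X, [Z, Y]_\mg \ra = 0, \qquad X, Y, Z \in \mg,
\end{equation*}
the symmetric-pair inclusions $[\kg,\pg] \subset \pg$ and $[\pg,\pg] \subset \kg$, and the identity \eqref{sym_spce_incl}, a direct computation shows that the pullback $g$ is block-diagonal with respect to $\ggo = \pg \oplus \kg_1 \oplus \cdots \oplus \kg_{r+s}$ and proportional to $Q$ on each block, with positive coefficients; this establishes \eqref{metric}.

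For the converse, given any positive tuple $(\beta, \alpha_1, \ldots, \alpha_{r+s})$, I would exhibit an explicit $f$ and $\ip$ making $g = \beta Q|_\pg + \sum_i \alpha_i Q|_{\kg_i}$ naturally reductive; the required identities reduce to \eqref{sym_spce_incl} together with the $\Ad(K)$-invariance of $Q|_{\kg_i}$. The main obstacle is the non-uniqueness of the decomposition, most visible when $\pg$ is $\Ad(K)$-isomorphic to some $\kg_i$ (for instance $G = \Sl(n,\CC)$ with $K = \SU(n)$): in such cases Schur's lemma alone permits a larger family of candidate pairs $(\mg, \ip)$, and one must verify, using the naturally reductive condition, that the resulting induced metric nevertheless depends only on the $r+s+1$ parameters appearing in \eqref{metric}.
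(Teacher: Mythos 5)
The paper itself does not prove Theorem~\ref{G}: the statement is imported from Gordon's work with a pointer to \cite[Section~5]{C85} (the compact analogue being \cite{DZ79}), so your proposal has to be measured against the cited source rather than against an argument in the text. Your outline --- realise $G$ as $(G\times K)/\Delta K$, write each $\Ad(\Delta K)$-invariant complement of $\Delta\kg$ as the graph of an $\Ad(K)$-equivariant map $f\colon\ggo\to\kg$, and let Schur's lemma plus the naturally reductive identity pin down the metric --- is the standard route and is faithful in spirit to Gordon's and D'Atri--Ziller's arguments. The graph parametrisation is correct (the map $(A,W)\mapsto A-W$ is an isomorphism $\mg\to\ggo$ because its kernel is $\mg\cap\Delta\kg$), and, since right translations by $K$ are isometries for any $g\in\mathcal M_K$, the induced form on $\ggo$ is automatically $\Ad(K)$-invariant; Schur's lemma then already yields $g|_{\pg}=\beta Q|_{\pg}$, $g|_{\kg_i}=\alpha_i Q|_{\kg_i}$ and kills every cross term except a possible $\pg$--$\kg_{i_0}$ coupling when $\pg\cong\kg_{i_0}$ as $\Ad(K)$-modules.

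The gap is that the two computations carrying the actual content are asserted rather than performed. First, the ``exceptional case'' you mention at the end is not a marginal check: if $\pg\cong\kg_{i_0}$ then every $\kg_j$ with $j\ne i_0$ centralises $\pg$, and since the centraliser of $\pg$ in the simple algebra $\ggo$ is an ideal, this forces $\kg$ simple; one is exactly in the case of a complex simple $G$ with compact real form $K$ and $r+s=1$, where the space of $\Ad(K)$-invariant inner products on $\ggo$ is three-dimensional. Eliminating the cross term there via the naturally reductive condition \emph{is} the nontrivial direction of Theorem~\ref{G}, and ``one must verify'' names the problem without solving it. Second, for the converse you do not need to search for $f$ and $\ip$ case by case: equip $\ggo\oplus\kg$ with the bi-invariant pseudo-Riemannian form $aB\oplus\big(\textstyle\bigoplus_i b_iB|_{\kg_i}\big)$ with $a>0$ and $-a<b_i<0$, take $\mg$ to be the orthogonal complement of $\Delta\kg$ (the graph of $f(X)=-\sum_i (a/b_i)X_{\kg_i}$), and restrict. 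Natural reductivity is then automatic from $\ad$-invariance of the form, and the induced metric is $aQ|_{\pg}+\sum_i\frac{-ab_i}{a+b_i}Q|_{\kg_i}$, which realises every tuple $(\beta,\alpha_1,\ldots,\alpha_{r+s})$ of positive numbers as $b_i$ ranges over $(-a,0)$. With that substitution the converse closes cleanly; the forward direction still needs the explicit computation in the complex-group case before the proposal can be called a proof.
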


\begin{remark} 
In~\cite{C85}, Gordon studied naturally reductive metrics on non-compact homogeneous spaces, not just on Lie groups. She obtained a version of Theorem~\ref{G} in this more general framework.
\end{remark}

\section{The Ricci curvature}\label{secRicci}

Our main objective in this section is to produce formulas for the Ricci curvature and the scalar curvature of a metric $g$ given by~(\ref{metric}). To do so, we need to introduce an array of constants, $\kappa_1,\ldots,\kappa_{r+s}$, associated with the pair~$(\ggo,\kg)$. Throughout the paper,
\begin{align*}
n=\dim\pg, \qquad d_i=\dim\kg_i, \qquad i=1,\ldots,r+s.
\end{align*}
As we explained above, $d_{r+1}=1$ if the centre of $\kg$ is non-trivial.

Suppose $B_i$ is the Killing form of~$\kg_i$ for $i=1,\ldots,r+s$. There exists $\kappa_i\in\mathbb R$ such that
\begin{align*}
B_i = \kappa_i B|_{\kg_i}.
\end{align*}
Using the assumption that $G$ is simple, one can easily check that $0<\kappa_i<1$ for $i=1,\ldots,r$. If the centre of $\kg$ is non-trivial, then $\kappa_{r+1}=0$.

Next, we state a proposition that provides a way of computing $\kappa_i$ for a specific pair~$(\ggo,\kg)$ and $i=1,\ldots,r$. In what follows, superscript~$\mathbb C$ means complexification. Clearly, the algebra $\ggo^{\mathbb C}$ is semisimple. We preserve the notation $\ad$ for the adjoint representation of~$\ggo^{\mathbb C}$. Choose Cartan subalgebras $H$ in $\ggo^{\mathbb C}$ and $H_i$ in~$\kg_i^{\mathbb C}$. It is easy to verify that $\ggo^{\mathbb C}$ is a completely reducible $\kg_i^{\mathbb C}$-module under the action given by~$\ad$. This observation implies that every element of $H_i$ must be semisimple in~$\ggo^{\mathbb C}$. Consequently, we may assume that $H$ contains~$H_i$. Let $\Phi^+$ and $\Phi_i^+$ be sets of positive roots of $\ggo^{\mathbb C}$ and~$\kg_i^{\mathbb C}$. The notation $\tr$ stands for the trace of a linear operator.

\begin{proposition}\label{prop_kappa}
Given $i=1,\ldots,r$ and $Z\in H_i$, the constant $\kappa_i$ satisfies
\begin{align*}
\sum_{\nu\in\Phi_i^+}(\nu(Z))^2=\kappa_i\sum_{\nu\in\Phi^+}(\nu(Z))^2.
\end{align*}
\end{proposition}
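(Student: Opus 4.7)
The plan is to reduce the identity to the standard formula expressing the Killing form of a complex semisimple Lie algebra on a Cartan subalgebra as a sum of squares of root values. The defining relation $B_i=\kappa_i B|_{\kg_i}$ is an identity of real bilinear forms on $\kg_i$; I will first extend it to the complexification. Since $\ad_{\ggo^{\CC}}Z$ is the $\CC$-linear extension of $\ad_\ggo Z$ for every $Z\in\ggo$, taking traces gives that the Killing form of $\ggo^{\CC}$ agrees with $B$ on $\ggo$ and therefore its $\CC$-bilinear extension equals $B^{\CC}$; the same remark applies to $B_i$ and $\kg_i^{\CC}$. The identity $B_i^{\CC}=\kappa_i B^{\CC}|_{\kg_i^{\CC}}$ then holds on all of $\kg_i^{\CC}$, and in particular on the Cartan subalgebra $H_i\subset\kg_i^{\CC}$, which by the argument sketched just before the statement may be assumed to sit inside the fixed Cartan subalgebra $H$ of $\ggo^{\CC}$.

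Next I would evaluate both sides at $Z\in H_i$. The standard root-space decomposition of a complex semisimple Lie algebra yields
\begin{align*}
B^{\CC}(Z,Z)=\tr(\ad Z)^2=\sum_{\alpha\in\Phi}\alpha(Z)^2=2\sum_{\alpha\in\Phi^+}\alpha(Z)^2,
\end{align*}
using that $\Phi=\Phi^+\sqcup(-\Phi^+)$ and the contribution of the Cartan factor in the trace is zero. Applying the same formula inside $\kg_i^{\CC}$ gives $B_i^{\CC}(Z,Z)=2\sum_{\nu\in\Phi_i^+}\nu(Z)^2$. The complexified identity $B_i^{\CC}(Z,Z)=\kappa_i B^{\CC}(Z,Z)$ then yields the claim after cancelling the common factor of~$2$.

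The only conceptual step that deserves care is that the adjoint action of $Z\in H_i$ on $\kg_i^{\CC}$ (used to compute $B_i^{\CC}(Z,Z)$ via $\Phi_i$) is genuinely the restriction of the adjoint action of $Z$ on $\ggo^{\CC}$ (used to compute $B^{\CC}(Z,Z)$ via $\Phi$); this follows because $\kg_i^{\CC}$ is an ideal of $\kg^{\CC}$ stable under $\ad Z$, and one uses the standard fact that on a Cartan subalgebra the trace formula for the Killing form reduces to the sum of squares of the weights of the adjoint representation. With these identifications in place, the proposition follows by a one-line comparison, so I do not expect any substantial obstacle; the whole content of the statement is packaging the defining ratio $\kappa_i$ in root-theoretic language.
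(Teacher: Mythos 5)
Your argument is correct and follows essentially the same route as the paper: complexify the relation $B_i=\kappa_i B|_{\kg_i}$ and then evaluate both Killing forms at $Z\in H_i$ via the standard identity $\tr(\ad(Z)^2)=2\sum_{\nu\in\Phi^+}(\nu(Z))^2$ and its analogue for $\kg_i^{\mathbb C}$. The only difference is that you spell out the passage to the complexification in more detail, whereas the paper simply invokes the simplicity of $\kg_i^{\mathbb C}$ inside $\ggo^{\mathbb C}$ to assert $B_i(Z,Z)=\kappa_i B(Z,Z)$.
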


\begin{proof}
We preserve the notation $B$ and $B_i$ for the Killing forms of $\ggo^{\mathbb C}$ and $\kg_i^{\mathbb C}$. Because $K$ is compact, $\kg_i^{\mathbb C}$ is a simple subalgebra of~$\ggo^{\mathbb C}$. Therefore,
\begin{align*}
B_i(Z,Z)=\kappa_iB(Z,Z).
\end{align*}
Using basic properties of root systems, we find
\begin{align*}
B_i(Z,Z)&=\tr\!\big(\!\ad(Z)\ad(Z)|_{\kg_i^{\mathbb C}}\big)=2\sum_{\nu\in\Phi_i^+}(\nu(Z))^2, \\ B(Z,Z)&=\tr(\ad(Z)\ad(Z))=2\sum_{\nu\in\Phi^+}(\nu(Z))^2.
\end{align*}
\end{proof}

\begin{remark}
The assertion of Proposition~\ref{prop_kappa} holds even if $G$ is not simple but merely semisimple.
\end{remark}

\begin{remark}
One can use properties of Casimir elements to produce another formula for~$\kappa_i$. More precisely, given $i=1,\ldots,r$, there exists a decomposition
\begin{align*}
\pg^{\mathbb C}=\pg_1^i\oplus\cdots\oplus\pg_{r_i}^i
\end{align*}
such that every $\pg_j^i$ is a non-trivial irreducible $\kg_i^{\mathbb C}$-module under the action defined by~$\ad$. Let $\psi_j^i$ be the highest weight of~$\pg_j^i$. Denote by $\rho_i$ the half-sum of positive roots of~$\kg_i^{\mathbb C}$. Using classical results on eigenvalues of Casimir elements, one can show that
\begin{align*}
\kappa_i=\frac{d_i}{d_i+\sum_{j=1}^{r_i}B_i(\psi_j^i,\psi_j^i+2\rho_i)\dim\pg_j^i},
\end{align*}
where we preserve the notation $B_i$ for the bilinear form induced on $H_i^*$ by the Killing form of~$\kg_i^{\mathbb C}$. Related formulas can be found in Dynkin's work; see~\cite{Dyn57}.
\end{remark}

\begin{example}\label{exa_kappa}
Assume $G=\SU(p,q)$ and $K=\SU(p)\times U(q)$ with $2\le p\le q$. Then $r=2$ and $s=1$ in the decomposition~\eqref{dec_k}. Clearly,
\begin{align*}
\ggo^{\mathbb C}=\slg(p+q,\mathbb C),\qquad \kg_1^{\mathbb C}=\slg(p,\mathbb C),\qquad \kg_2^{\mathbb C}=\slg(q,\mathbb C),\qquad \kg_3^{\mathbb C}=\mathbb C.
\end{align*}
Denote by $E_i^j$ the matrix of size $(p+q)\times(p+q)$ that has 1 in the $(i,j)$th slot and 0 elsewhere. Suppose
\begin{align*}
&H=\bigg\{\sum_{i=1}^{p+q-1}\lambda_i\big(E_i^i-E_{i+1}^{i+1}\big)\,\bigg|\,\lambda_i\in\mathbb C\bigg\}, & &H_1=\bigg\{\sum_{i=1}^{p-1}\lambda_i\big(E_i^i-E_{i+1}^{i+1}\big)\,\bigg|\,\lambda_i\in\mathbb C\bigg\}, \\
&\Phi^+=\{\epsilon_i-\epsilon_j\,|\,1\le i<j\le p+q\},& &\Phi_1^+=\{\epsilon_i-\epsilon_j\,|\,1\le i<j\le p\},
\end{align*}
where $\epsilon_i$ is the linear functional on $H$ such that $\epsilon_i\big(E_j^j-E_{j+1}^{j+1}\big)$ is the difference of Kronecker deltas~$\delta_i^j-\delta_i^{j+1}$. Choosing $Z=E_1^1-E_2^2$, we find
\begin{align*}
\sum_{\nu\in\Phi^+}(\nu(Z))^2=&\big((\epsilon_1-\epsilon_2)\big(E_1^1-E_2^2\big)\big)^2+\sum_{i=3}^{p+q}\big((\epsilon_1-\epsilon_i)\big(E_1^1-E_2^2\big)\big)^2 \\
&+\sum_{i=3}^{p+q}\big((\epsilon_2-\epsilon_i)\big(E_1^1-E_2^2\big)\big)^2=2(p+q),\\
\sum_{\nu\in\Phi_1^+}(\nu(Z))^2=&2p.
\end{align*}
Proposition~\ref{prop_kappa} implies $\kappa_1=\frac p{p+q}$. A similar argument with $Z=E_{p+1}^{p+1}-E_{p+2}^{p+2}$ yields $\kappa_2=\frac q{p+q}$. Since $\kg_3$ is abelian, $\kappa_3=0$.
\end{example}

If $r=1$, one can calculate $\kappa_i$ using formula~\eqref{sum_dkappan} below; see Examples~\ref{exa_simple} and~\ref{ex_2ideals}.

\begin{remark}
The work~\cite{DZ79} computes a range of constants analogous to $\kappa_i$ in the framework of compact Lie groups. One can find $\kappa_i$ for a specific pair $(\ggo,\kg)$ using those results along with duality for symmetric spaces; see, e.g.,~\cite[Sections~7.82--7.83]{Bss}. In the present paper, we choose a more direct approach.
\end{remark}

We are now ready to state the main result of this section.

\begin{theorem}\label{Ric} 
Suppose $g\in\mca_K$ is a naturally reductive metric on the simple group~$G$ satisfying~(\ref{metric}). The Ricci curvature of $g$ is given by the formulas
	\begin{align*}
	&\Ricci (g)|_{\pg} = - \sum_{i=1}^{r+s} \Big(\frac{\alpha_i}{2 \beta}+1\Big)\frac{ d_i(1-\kappa_i)}{n}  Q|_{\pg}, \\ 
	&\Ricci (g)|_{\kg_j} = \unc\Big(\frac{\alpha_j^2}{\beta^2}(1-\kappa_j)+\kappa_j\Big)Q|_{\kg_j}, \\
	&\Ricci (g)(\pg,\kg_j)=\Ricci (g)(\kg_j,\kg_k)=0, &j,k =1,\ldots,r+s,~j\ne k.
	\end{align*}
\end{theorem}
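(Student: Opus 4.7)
My approach is to evaluate a standard formula for the Ricci curvature of a left-invariant metric on an unimodular Lie group. Since $G$ is semisimple, $\tr\ad(Z) = 0$ for every $Z \in \ggo$, and hence for any $g$-orthonormal basis $\{E_\gamma\}$ of $\ggo$ and any $X \in \ggo$,
\[
\Ricci(g)(X, X) = -\tfrac{1}{2} B(X, X) - \tfrac{1}{2}\sum_\gamma g([X, E_\gamma], [X, E_\gamma]) + \tfrac{1}{4}\sum_{\gamma, \delta} g([E_\gamma, E_\delta], X)^2.
\]
I would use the basis $\{\beta^{-1/2} e_a\} \cup \bigcup_i \{\alpha_i^{-1/2} f_b^{(i)}\}$ obtained from $Q$-orthonormal bases $\{e_a\}$ of $\pg$ and $\{f_b^{(i)}\}$ of $\kg_i$. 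The symmetric-space relations $[\pg, \pg] \subset \kg$, $[\pg, \kg] \subset \pg$, $[\kg, \kg] \subset \kg$, together with $[\kg_i, \kg_j] = 0$ for $i \ne j$, decide which summands contribute in each term. Moreover, $B$ is block-diagonal with respect to $\pg \oplus \kg_1 \oplus \cdots \oplus \kg_{r+s}$: for the $\pg/\kg$ pairing this is the Cartan orthogonality, and for the $\kg_i/\kg_j$ pairing with $i \ne j$ it follows from the perfectness of each simple $\kg_i$ combined with the $\ad$-invariance of~$B$.

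The engine of the computation is a pair of Casimir-type identities. For $X \in \pg$,
\[
\sum_b Q([f_b^{(i)}, X], [f_b^{(i)}, X]) = \frac{(1 - \kappa_i) d_i}{n}\, Q(X, X).
\]
I would derive this by observing that the left-hand side is an $\ad(\kg)$-invariant symmetric bilinear form on $\pg$. Since $\pg$ is irreducible as a $\kg$-module (because $G/K$ is an irreducible symmetric space), it must be a scalar multiple of $Q|_\pg$, and the scalar is pinned down by taking the trace over $\pg$ and using
\[
\tr_\pg(\ad(Z)^2) = (1 - \kappa_i)\, B(Z, Z), \qquad Z \in \kg_i,
\]
which follows from $\tr_\ggo(\ad(Z)^2) = \tr_{\kg_i}(\ad(Z)^2) + \tr_\pg(\ad(Z)^2)$ together with the defining relation $B_i = \kappa_i B|_{\kg_i}$. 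A parallel argument, applied to the adjoint action of $\kg_j$ on itself, yields
\[
\sum_b Q([X, f_b^{(j)}], [X, f_b^{(j)}]) = \kappa_j\, Q(X, X), \qquad X \in \kg_j.
\]

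Substituting these two identities into the three terms of the Ricci formula, the diagonal blocks fall out after keeping track of the rescalings $\sqrt\beta, \sqrt{\alpha_i}$ and of the sign convention $Q|_\pg = B|_\pg$ versus $Q|_{\kg_i} = -B|_{\kg_i}$. On the $\kg_j$-block the three terms combine directly to $\tfrac14\bigl(\tfrac{\alpha_j^2}{\beta^2}(1 - \kappa_j) + \kappa_j\bigr)\, Q$. On the $\pg$-block the $\beta/\alpha_i$-dependence of the second term cancels against the third term, leaving the $\alpha_i/\beta$-dependence; recasting the remainder in the form of the theorem also requires the identity $\sum_i d_i (1 - \kappa_i) = n/2$, which is not an additional input but drops out by computing $\tr_\pg(\ad(X)^2)$ for $X \in \pg$ in two ways---via $\tr_\ggo = \tr_\pg + \tr_\kg$ combined with the Casimir identity, and directly via $\tr_\pg(\ad(X)^2) = \sum_a Q([X, e_a], [X, e_a])$. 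The off-diagonal vanishings are then quick: the Killing-form term vanishes by block-diagonality of $B$; the third Ricci-formula term vanishes because $[E_\gamma, E_\delta]$ cannot simultaneously carry a $\pg$-component and a $\kg_j$-component, nor two distinct $\kg$-ideal components; and the second term vanishes since, in the only nontrivial case $(X, Y) \in \kg_j \times \kg_k$ with $j \ne k$, it reduces to $\tr_\pg(\ad(X)\ad(Y)) = B(X, Y) - \tr_\kg(\ad(X)\ad(Y))$, which is zero term-by-term. The main obstacle is organisational: reconciling the Casimir identities with the $Q/B$ sign conventions across blocks; after that, everything follows by direct substitution.
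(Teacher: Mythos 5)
Your proof is correct in substance but follows a genuinely different route from the paper's. You start from the Milnor--Besse formula for the Ricci curvature of a left-invariant metric on a unimodular Lie group and feed it the pointwise Casimir identities $\sum_b Q([f_b^{(i)},X],[f_b^{(i)},X])=\tfrac{d_i(1-\kappa_i)}{n}Q(X,X)$ on $\pg$ and $\sum_b Q([X,f_b^{(j)}],[X,f_b^{(j)}])=\kappa_j Q(X,X)$ on $\kg_j$; the paper instead computes the Levi-Civita connection explicitly via Gordon's Koszul formula~\eqref{Koszul}, uses $\Ricci(g)(X,Y)=-\tr\nabla_{\nabla_\cdot Y}X$, and only ever needs the \emph{traces} of the forms $A_j(X,Y)=\tr(\pi_{\kg_j}\ad(X)\ad(Y))$ (Lemma~\ref{lemma_Ai}), because the $\ad(\kg)$-invariance of $\Ricci(g)|_\pg$ already forces it to be a multiple of $Q|_\pg$. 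Your first Casimir identity is exactly the pointwise statement $A_i=\tfrac{d_i(1-\kappa_i)}{n}Q|_\pg$, so you invoke Schur's lemma one level earlier than the paper does; both arguments ultimately rest on the irreducibility of $\pg$ as a $\kg$-module, and your derivation of $\sum_i d_i(1-\kappa_i)=n/2$ is a legitimate alternative to the paper's~\eqref{sum_dkappan}. One slip in your justification of the off-diagonal vanishing: for $X\in\kg_j$, $Y\in\kg_k$ with $j\ne k$, it is \emph{not} true that $[E_\gamma,E_\delta]$ cannot carry components in two distinct ideals of $\kg$ --- when $E_\gamma,E_\delta\in\pg$ the bracket lands in $\kg$ and can have nonzero projections onto several $\kg_i$ simultaneously (e.g.\ for $\ggo=\sug(p,q)$). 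The third term nevertheless vanishes, because $\sum_{a,a'}Q([e_a,e_{a'}],X)\,Q([e_a,e_{a'}],Y)=-\tr_\pg(\ad(X)\ad(Y))$, which is zero by the very trace argument you give for the second term; so the conclusion stands, but the reason you cite for that particular term is wrong and should be replaced by the trace computation.
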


To prove Theorem~\ref{Ric}, we apply the strategy developed in~\cite[Section~5]{DZ79}. In what follows, $\tr_h$ stands for the trace of a bilinear form with respect to an inner product~$h$. The notation $\pi_\ug$ is used for the $Q$-orthogonal projection onto $\ug\subset\ggo$. For $j=1,\ldots,r+s$, define a bilinear form $A_j$ on $\pg$ by setting
\begin{align*}
A_j(X,Y)=\tr(\pi_{\kg_j}\ad(X)\ad(Y)).
\end{align*}
Fix $Q$-orthonormal bases $(v_i)_{i=1}^n$ of $\pg$ and $\big(v_k^j\big)_{k=1}^{d_j}$ of $\kg_j$. We need the following auxiliary result; cf.~\cite[pages~609--610]{Jen73} and~\cite[pages~32--34]{DZ79}.

\begin{lemma}\label{lemma_Ai}
Given $j=1,\ldots,r+s$,
\begin{align*}
\tr_{Q|_\pg}A_j=d_j(1-\kappa_j),\qquad \sum_{i=1}^{r+s}A_i=\unm Q|_{\pg}.
\end{align*}
\end{lemma}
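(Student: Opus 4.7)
The plan is to work in the $Q$-orthonormal basis obtained by concatenating $(v_i)_{i=1}^n$ with the bases $\big(v_k^j\big)_{k,j}$, and to exploit two consequences of the Cartan decomposition: $Q$ is $\ad(\kg)$-invariant (so each $\ad(Z)$ with $Z\in\kg$ is $Q$-skew on $\ggo$), and $\ad(\kg_i)$ annihilates $\kg_j$ whenever $i\neq j$.

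For the first identity I first expand the trace defining $A_j(v_i,v_i)$ in this basis to obtain
\[
A_j(v_i,v_i)=\sum_{k=1}^{d_j}Q\big([v_i,[v_i,v_k^j]],v_k^j\big).
\]
An application of \eqref{sym_spce_incl} turns each summand into $\|[v_i,v_k^j]\|_Q^2$, so
\[
\tr_{Q|_\pg}A_j=\sum_{i,k}\|[v_i,v_k^j]\|_Q^2=-\sum_{k=1}^{d_j}\tr\big(\ad(v_k^j)^2\big|_\pg\big),
\]
the last equality using that $\ad(v_k^j)$ is $Q$-skew and preserves $\pg$. To evaluate the right-hand side, split $B(v_k^j,v_k^j)=\tr(\ad(v_k^j)^2)$ into $\pg$- and $\kg$-parts. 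Because $\kg_j$ commutes with $\kg_i$ for $i\ne j$ and $\ad(v_k^j)|_\kg$ lands in $\kg_j$, the $\kg$-trace equals $B_j(v_k^j,v_k^j)=\kappa_j B(v_k^j,v_k^j)$; the central case $j=r+1$ is consistent with the convention $\kappa_{r+1}=0$, since then $\ad(v_k^{r+1})|_\kg=0$. Using $B(v_k^j,v_k^j)=-Q(v_k^j,v_k^j)=-1$, each $k$-summand contributes $1-\kappa_j$, which yields $\tr_{Q|_\pg}A_j=d_j(1-\kappa_j)$.

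For the second identity, summing over $j$ gives $\sum_{i=1}^{r+s}A_i(X,Y)=\tr(\pi_\kg\ad(X)\ad(Y))$. For $X,Y\in\pg$, the operator $\ad(X)\ad(Y)$ preserves the Cartan splitting, so $B(X,Y)=\tr(\ad(X)\ad(Y)|_\pg)+\tr(\ad(X)\ad(Y)|_\kg)$. A reformulation of \eqref{sym_spce_incl} reads $Q([X,Y],Z)=Q(Y,[X,Z])$ for $Y\in\pg$, $Z\in\kg$, so the two off-diagonal blocks $\ad(X)\colon\pg\to\kg$ and $\ad(X)\colon\kg\to\pg$ are mutually $Q$-adjoint. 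A short cyclic-trace computation combined with this adjointness forces the two summands above to coincide, whence $\tr(\pi_\kg\ad(X)\ad(Y))=\unm B(X,Y)=\unm Q|_\pg(X,Y)$. The main delicacy throughout is tracking signs between $B$ and $Q$ on $\kg$ and verifying that the central factor $\kg_{r+1}$ slots into the same formula as the simple ideals; thanks to the convention $\kappa_{r+1}=0$, the two cases merge into a single uniform argument.
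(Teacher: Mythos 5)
Your proof is correct and follows essentially the same route as the paper's: both reduce the first identity to splitting $B(v_k^j,v_k^j)=\tr(\ad(v_k^j)^2)$ into its $\pg$- and $\kg$-blocks and identifying the $\kg$-block with $\kappa_jB(v_k^j,v_k^j)$ via the ideal decomposition, and both derive the second identity by showing the $\pg$- and $\kg$-blocks of $\ad(X)\ad(Y)$ contribute equally to $B(X,Y)$ (the paper phrases this via the symmetry of the $A_i$ plus cyclicity of the trace, you via the mutual $Q$-adjointness of the off-diagonal blocks of $\ad(X)$ — the same fact in different clothing). No gaps; your explicit treatment of the central factor $\kg_{r+1}$ is a nice touch.
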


\begin{proof}
Invoking~\eqref{sym_spce_incl}, we compute
\begin{align*}
\tr_{Q|_\pg}A_j&=\sum_{i=1}^n\sum_{k=1}^{d_j}Q\big([v_i,[v_i,v_k^j]],v_k^j\big)
=-\sum_{i=1}^n\sum_{k=1}^{d_j}Q\big(v_i,[[v_i,v_k^j],v_k^j]\big)
\\ &=-\tr_{Q|_{\kg_j}}B|_{\kg_j}+\sum_{k=1}^{d_j}\sum_{l=1}^{r+s}\sum_{m=1}^{d_l}Q\big([v_k^j,[v_k^j,v_m^l]],v_m^l\big).
\end{align*}
Since $\kg_j$ is an ideal of $\kg$,
\begin{align*}
\sum_{l=1}^{r+s}\sum_{m=1}^{d_l}\sum_{k=1}^{d_j}Q\big([v_k^j,[v_k^j,v_m^l]],v_m^l\big)&=
\sum_{m=1}^{d_j}\sum_{k=1}^{d_j}Q\big([v_k^j,[v_k^j,v_m^j]],v_m^j\big) =\tr_{Q|_{\kg_j}}B_j=\kappa_j\tr_{Q|_{\kg_j}}B|_{\kg_j}.
\end{align*}
We conclude that
\begin{align*}
\tr_{Q|_\pg}A_j=-\tr_{Q|_{\kg_j}}B|_{\kg_j}+\kappa_j\tr_{Q|_{\kg_j}}B|_{\kg_j}=d_j(1-\kappa_j).
\end{align*}
This proves the first equality in the statement of the lemma.

For $X,Y\in\pg$,
\begin{align*}
\sum_{i=1}^{r+s}A_i(X,Y)&=
\sum_{i=1}^{r+s}\sum_{k=1}^{d_i}Q\big([X,[Y,v_k^i]],v_k^i\big)
=B(X,Y)-\sum_{l=1}^nQ([X,[Y,v_l]],v_l).
\end{align*}
It is easy to see that the forms $A_i$ are symmetric. Consequently,
\begin{align*}
\sum_{l=1}^nQ([X,[Y,v_l]],v_l)=\tr(\ad(X)\pi_\kg\ad(Y))=\tr(\pi_\kg\ad(Y)\ad(X))=\sum_{i=1}^{r+s}A_i(X,Y).
\end{align*}
We conclude that
\begin{align*}
\sum_{i=1}^{r+s}A_i(X,Y)=B(X,Y)-\sum_{i=1}^{r+s}A_i(X,Y),
\end{align*}
which implies the second inequality.
\end{proof}

\begin{proof}[Proof of Theorem~\ref{Ric}]
Let $\nabla$ be the Levi-Civita connection of the metric~$g$. The Koszul formula yields
\begin{align}\label{Koszul}
\nabla_X Y=\begin{cases}
\unm [X,Y] & \mbox{if}~X,Y \in \pg~\mbox{or}~X,Y\in\kg, \\
-\frac{\alpha_i}{2 \beta} [X,Y] & \mbox{if}~X \in \pg~\mbox{and}~Y \in \kg_i~\mbox{for some}~i=1, \ldots, r+s, \\
\big(\frac{\alpha_i}{2 \beta}+1\big)[X,Y] & \mbox{if}~X \in \kg_i~\mbox{for some}~i=1, \ldots, r+s~\mbox{and}~Y \in \pg;
\end{cases}
\end{align}
see~\cite[page~485]{C85}. The Ricci curvature of $g$ satisfies
\begin{align}\label{Ric_AA}
\Ricci(g)(X,Y)
=-\tr\nabla_{\nabla_\cdot Y}X
,\qquad X,Y\in\ggo.
\end{align}
This fact goes back to~\cite{Sag70}; a simpler proof appeared in~\cite[Section~5]{DZ79}. Substituting~\eqref{Koszul} into~\eqref{Ric_AA}, we easily obtain the required identities for $\Ricci (g)|_{\kg_j}$, $\Ricci (g)(\pg,\kg_j)$ and $\Ricci (g)(\kg_j,\kg_k)$ with $j\ne k$; cf.~\cite[page~33]{DZ79}.

Because $g$ lies in $\mca_K$, it is $\ad(\kg)$-invariant. Consequently, there exists $\tau\in\mathbb R$ such that $\Ricci(g)|_{\pg}=\tau Q|_{\pg}$. Taking the trace with respect to $Q|_{\pg}$ on both sides and exploiting Lemma~\ref{lemma_Ai}, we find
\begin{align*}
n\tau&=\sum_{i=1}^n\Ricci(g)(v_i,v_i)=-\sum_{i=1}^n\bigg(\sum_{l=1}^nQ(\nabla_{\nabla_{v_l}v_i}v_i,v_l)+\sum_{j=1}^{r+s}\sum_{m=1}^{d_j}Q\big(\nabla_{\nabla_{v_m^j}v_i}v_i,v_m^j\big)\bigg) 
\\
&=-\unm\sum_{i=1}^n\sum_{j=1}^{r+s}\Big(\frac{\alpha_j}{2\beta}+1\Big)\sum_{m=1}^{d_j}\bigg(\sum_{l=1}^nQ\big([v_l,v_i],v_m^j\big)Q\big([v_m^j,v_i],v_l\big)+Q\big([[v_m^j,v_i],v_i],v_m^j\big)\bigg)
\\
%
%
&=-\sum_{i=1}^n\sum_{j=1}^{r+s}\Big(\frac{\alpha_j}{2\beta}+1\Big)\sum_{m=1}^{d_j}Q\big([v_i,[v_i,v_m^j]],v_m^j\big)
\\
&=-\sum_{j=1}^{r+s}\Big(\frac{\alpha_j}{2\beta}+1\Big)\tr_{Q|_{\pg}}A_j=-\sum_{j=1}^{r+s}\Big(\frac{\alpha_j}{2\beta}+1\Big)d_j(1-\kappa_j).
\end{align*}
Therefore,
\begin{align*}
\tau=-\sum_{j=1}^{r+s}\Big(\frac{\alpha_j}{2\beta}+1\Big)\frac{d_j(1-\kappa_j)}n,
\end{align*}
which yields the required identity for~$\Ricci(g)|_{\pg}$.
\end{proof}

Denote by $S$ the scalar curvature functional on~$\mca_K$. Our next goal is to produce a formula for~$S$. Taking the trace of the second equality in Lemma~\ref{lemma_Ai} and using the first one, we obtain
\begin{align}\label{sum_dkappan}
2\sum_{i=1}^{r+s} d_i (1-\kappa_i) = n.
\end{align}
Theorem~\ref{Ric} and~\eqref{sum_dkappan} imply the following result.

\begin{corollary}\label{scalar}
Suppose $g\in\mca_K$ is a naturally reductive metric on the simple group~$G$ satisfying~(\ref{metric}). The scalar curvature of $g$ is given by the formula
\begin{equation*}
S(g) = -\unc \sum_{i=1}^{r+s} \frac{\alpha_i}{\beta^2} d_i (1-\kappa_i) -  \frac{n}{2 \beta} + \unc \sum_{i=1}^{r} \frac{d_i\kappa_i}{\alpha_i}.
\end{equation*} 
\end{corollary}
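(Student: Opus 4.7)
The plan is to compute $S(g)$ directly as the trace of $\Ricci(g)$ with respect to~$g$, using the block-diagonal structure $\ggo=\pg\oplus\kg_1\oplus\cdots\oplus\kg_{r+s}$ provided by Theorem~\ref{Ric}. Since the metric~(\ref{metric}) restricts to $\beta Q|_\pg$ on $\pg$ and to $\alpha_j Q|_{\kg_j}$ on each $\kg_j$, and since the off-diagonal components of $\Ricci(g)$ vanish, the trace splits as
\begin{align*}
S(g) = \tfrac{1}{\beta}\tr_{Q|_\pg}\!\Ricci(g)|_\pg + \sum_{j=1}^{r+s}\tfrac{1}{\alpha_j}\tr_{Q|_{\kg_j}}\!\Ricci(g)|_{\kg_j}.
\end{align*}
Substituting the two scalar multiples of $Q$ from Theorem~\ref{Ric} and recalling that $\dim\pg=n$ and $\dim\kg_j=d_j$ turns this into an elementary algebraic expression in $\beta$, the $\alpha_j$'s, the $d_j$'s and the $\kappa_j$'s.

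After expansion, the contribution from $\pg$ becomes $-\tfrac{1}{\beta}\sum_{i=1}^{r+s}\!\big(\tfrac{\alpha_i}{2\beta}+1\big)d_i(1-\kappa_i)$, while the contribution from each $\kg_j$ is $\tfrac{d_j}{4\alpha_j}\big(\tfrac{\alpha_j^2}{\beta^2}(1-\kappa_j)+\kappa_j\big)$. Combining the two terms proportional to $\alpha_i/\beta^2$ (one with coefficient $-\tfrac{1}{2}$ and one with $+\tfrac{1}{4}$) produces the first sum of the claimed formula, namely $-\tfrac{1}{4}\sum_i \tfrac{\alpha_i d_i(1-\kappa_i)}{\beta^2}$.

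At this stage, the $1/\beta$ term reads $-\tfrac{1}{\beta}\sum_{i=1}^{r+s}d_i(1-\kappa_i)$, and here I would invoke the identity~\eqref{sum_dkappan}, which was obtained by combining the two statements of Lemma~\ref{lemma_Ai}, to rewrite this as $-\tfrac{n}{2\beta}$. Finally, for the term $\sum_{j=1}^{r+s}\tfrac{d_j\kappa_j}{4\alpha_j}$, I would use the fact recorded just after Proposition~\ref{prop_kappa} that $\kappa_{r+1}=0$ when the centre of $\kg$ is non-trivial, so the possible $j=r+1$ summand drops out and the sum truncates at $j=r$, matching the corollary.

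There is no real obstacle here: the whole argument is bookkeeping once Theorem~\ref{Ric} and~\eqref{sum_dkappan} are in hand. The only two conceptual points to watch are (i)~the cancellation $-\tfrac{1}{2}+\tfrac{1}{4}=-\tfrac{1}{4}$ for the $\alpha_i/\beta^2$ coefficient, and (ii)~the truncation of the final sum from $r+s$ to $r$ via $\kappa_{r+1}=0$.
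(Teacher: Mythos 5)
Your proposal is correct and follows exactly the route the paper intends: Corollary~\ref{scalar} is stated in the paper as an immediate consequence of Theorem~\ref{Ric} and identity~\eqref{sum_dkappan}, which is precisely the trace computation you carry out, including the $-\tfrac12+\tfrac14=-\tfrac14$ cancellation and the truncation of the last sum via $\kappa_{r+1}=0$. Nothing is missing.
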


\section{Metrics with prescribed Ricci curvature}\label{sec_Ricci=T}

Consider a (0,2)-tensor field $T$ on the simple group $G$. In this section, we state a necessary and sufficient condition for the solvability of the equation
\begin{align}\label{bdy_PRC_no_c}
\Ricci(g)=T
\end{align}
in the class $\mathcal M_K$. If a metric $g\in\mathcal M_K$ satisfying~\eqref{bdy_PRC_no_c} exists, then $T$ must be left-invariant. Moreover, by Theorem~\ref{Ric}, the formula
\begin{equation}\label{T}
T = -T_{\pg} Q|_{\pg} + T_1 Q|_{\kg_1} + \ldots + T_{r+s} Q|_{\kg_{r+s}}
\end{equation}
holds with $T_{\pg},T_1,\ldots,T_{r+s}>0$.

\begin{theorem}\label{thm_no_c}
Suppose $T$ is a left-invariant (0,2)-tensor field on $G$ given by~(\ref{T}). A naturally reductive metric $g\in\mathcal M_K$ satisfying~(\ref{bdy_PRC_no_c}) exists if and only if
\begin{align}\label{cond1_cno}
4T_i -\kappa_i>0
\end{align}
for all $i=1,\ldots,r$ and
\begin{align}\label{cond_noc}
T_{\pg}=\sum_{i=1}^{r+s}\frac{2d_i(1-\kappa_i)+d_i\sqrt{(4T_i -\kappa_i)(1-\kappa_i)}}{2n}.
\end{align}
There is at most one such $g$, up to scaling.
\end{theorem}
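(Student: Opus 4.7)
My plan is to reduce the equation $\Ricci(g)=T$ to an explicit algebraic system in the scalar parameters $\beta,\alpha_1,\ldots,\alpha_{r+s}$ via Theorem~\ref{Ric}, and then solve that system by direct computation. To begin, the block-diagonal form of $\Ricci(g)$ supplied by Theorem~\ref{Ric} forces any admissible $T$ to have the shape~(\ref{T}) with the stated positivity. Comparing the $\pg$- and $\kg_j$-blocks of $\Ricci(g)=T$ then produces two groups of scalar equations:
\[
T_\pg=\sum_{i=1}^{r+s}\Big(\tfrac{\alpha_i}{2\beta}+1\Big)\tfrac{d_i(1-\kappa_i)}{n},\qquad 4T_j=\tfrac{\alpha_j^2}{\beta^2}(1-\kappa_j)+\kappa_j,\quad j=1,\ldots,r+s.
\]

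Next I would solve the $\kg_j$-equations for the ratios $\alpha_j/\beta$. Since $0<\kappa_j<1$ for $j=1,\ldots,r$ and $\kappa_{r+1}=0$ when $s=1$, each such equation rearranges to $\alpha_j^2/\beta^2=(4T_j-\kappa_j)/(1-\kappa_j)$, which admits a positive solution precisely when $4T_j-\kappa_j>0$. That is condition~(\ref{cond1_cno}), and it is automatic for $j=r+1$ given $T_{r+1}>0$. In this case the ratio $\alpha_j/\beta$ is uniquely determined as the positive square root, and substituting these values into the $\pg$-equation produces exactly~(\ref{cond_noc}) after elementary simplification. Conversely, assuming~(\ref{cond1_cno}) and~(\ref{cond_noc}), I would pick any $\beta>0$ and define the $\alpha_j$ from the prescribed ratios; reversing the computation yields a metric $g\in\mca_K$ with $\Ricci(g)=T$.

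For uniqueness up to scaling, the key observation is that only the ratios $\alpha_j/\beta$ are constrained by the system, so if $g_1,g_2\in\mca_K$ both satisfy $\Ricci(g)=T$, they must share the same ratios, forcing $g_1=(\beta_1/\beta_2)g_2$. Given Theorem~\ref{Ric}, no genuine obstacle arises: the argument reduces to elementary algebra. The only point deserving a little care is to treat the at-most-one-dimensional abelian summand $\kg_{r+1}$ uniformly with the simple ideals using $\kappa_{r+1}=0$, which the statement has been formatted to accommodate.
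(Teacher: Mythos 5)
Your argument is correct and coincides with the paper's own proof: both reduce $\Ricci(g)=T$ via Theorem~\ref{Ric} to the system $4T_j=\tfrac{\alpha_j^2}{\beta^2}(1-\kappa_j)+\kappa_j$ and the $\pg$-trace equation, solve for the ratios $\alpha_j/\beta$ (which exist positively iff~(\ref{cond1_cno}) holds), and substitute to obtain~(\ref{cond_noc}), with uniqueness up to scaling following because only the ratios are determined. No discrepancies to report.
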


\begin{proof}
Consider a naturally reductive metric $g\in\mathcal M_K$. It satisfies~\eqref{metric} for some $\beta,\alpha_1,\ldots,\alpha_{r+s}>0$. By Theorem~\ref{Ric}, the Ricci curvature of $g$ equals $T$ if and only if
\begin{align*}
\frac{\alpha_i}{\beta}&=\sqrt{\frac{4T_i -\kappa_i}{1-\kappa_i}}, \qquad  1\leq i \leq r+s, \\ 
T_{\pg}&=\sum_{i=1}^{r+s} \Big(1+\frac{\alpha_i}{2 \beta}\Big)\frac{(1-\kappa_i) d_i}{n}
=\sum_{i=1}^{r+s}\frac{2d_i(1-\kappa_i)+d_i\sqrt{(4T_i -\kappa_i)(1-\kappa_i)}}{2n}.
\end{align*}
This observation proves the result.
\end{proof}

It is tempting to use Theorem~\ref{thm_no_c} to study the solvability of the equation
\begin{equation}\label{prescribed}
\Ricci (g)= cT
\end{equation}
in the class $\mca_K$. Indeed, suppose $\Xi$ is the set of left-invariant tensor fields on $G$ satisfying~\eqref{T}, \eqref{cond1_cno} and~\eqref{cond_noc}. Theorem~\ref{thm_no_c} states that~\eqref{bdy_PRC_no_c} has a solution if and only if $T$ lies in~$\Xi$. Using this result, we can easily obtain a description of the set of tensor fields that coincide with Ricci curvatures of metrics in $\mca_K$ up to scaling. Namely, a pair $(g,c)\in\mca_K\times(0,\infty)$ satisfying~\eqref{prescribed} exists if and only if
\begin{align*}
T\in\Xi'=\{\tau h\,|\,\tau>0~\mbox{and}~h\in\Xi\}.
\end{align*}
However, in general, it is difficult to determine whether a specific $T$ given by~\eqref{T} lies in~$\Xi'$. To do so, one has to check whether~\eqref{cond1_cno} and~\eqref{cond_noc} hold with $T_{\mathfrak p},T_1,\ldots,T_{r+s}$ replaced by $cT_{\mathfrak p},cT_1,\ldots,cT_{r+s}$ for some $c>0$. 
Already when $r+s=2$, this involves the tricky task of understanding if a polynomial of degree~$4$ has roots that 
obey several constraints; when $r+s\ge3$, the question seems to be substantially harder. In the present paper, we take a different approach to the analysis of~\eqref{prescribed}. We are able to show that the existence of a pair $(g,c)\in\mca_K\times(0,\infty)$ satisfying~\eqref{prescribed} follows from simple inequalities for the components of~$T$. Moreover, we draw interesting conclusions regarding the non-uniqueness of such a pair.

\section{Metrics with Ricci curvature prescribed up to scaling}\label{sec_Ricci=cT}

Suppose $T$ is a left-invariant symmetric (0,2)-tensor field on~$G$. Our next goal is to study the solvability of equation~\eqref{prescribed} in the class $\mathcal M_K$. As above, we assume the group $G$ is simple. This implies, in particular, that $\kappa_i<1$ for all~$i$.

First, we re-state the problem in variational terms. More precisely, define
\begin{align}\label{MKT_def}
\mathcal{M}^+_{T} &=\{g \in \mathcal{M}_ K\,|\,\tr_{g}T=1\},\notag\\
\mathcal{M}^-_{T} & =\{g \in \mathcal{M}_ K\,|\,\tr_{g}T=-1\},\notag \\
\mathcal{M}^0_{T} &=\{g \in \mathcal{M}_ K\,|\,\tr_{g}T=0\}.
\end{align}
Each of these three spaces carries a manifold structure induced from~$\mathcal M_K$. In Section~\ref{sec_variational}, we show that $g$ satisfies~\eqref{prescribed} if and only if it is (up to scaling) a critical point of $S|_{\mathcal{M}^+_{T}}$, $S|_{\mathcal{M}^-_{T}}$ or $S|_{\mathcal{M}^0_{T}}$. This resembles the variational interpretation of~\eqref{prescribed} on compact Lie groups for positive-semidefinite~$T$ (see~\cite[Proposition~3.1]{APZ20}); however, in that case, only $S|_{\mathcal{M}^+_{T}}$ needs to be considered. In Sections~\ref{sec_+} and~\ref{sec_-}, we obtain sufficient conditions for the existence of global maxima of $S|_{\mathcal{M}^+_{T}}$ and $S|_{\mathcal{M}^-_{T}}$, respectively. This requires complex estimates on the scalar curvature obtained in Lemmas~\ref{lemmacompact+} and~\ref{lemmacompact-}. In Section~\ref{sec_0}, we classify completely the critical points of~$S|_{\mathcal{M}^0_{T}}$ assuming $r+s\le2$. The analysis here can be reduced, as Lemma~\ref{lem_crit=root} demonstrates, to the study of a cubic polynomial in one variable. Finally, in Section~\ref{sec_sum}, we summarise the implications of our results for the prescribed Ricci curvature problem.

\begin{remark}\label{rem_2c_Tim}
It appears that~\eqref{prescribed} admits a similar variational interpretation on compact homogeneous spaces when $T$ has  mixed signature. Thus, our arguments yield new insight into the study of~\eqref{prescribed} in that setting. For instance, Buttsworth showed in~\cite{TB19} through methods of elementary polynomial analysis that, for certain $T$ on~$\SU(2)$, a left-invariant metric $g$ satisfying~\eqref{prescribed} exists for precisely two distinct constants~$c\in\mathbb R$. This was somewhat surprising at the time, as nothing similar had occurred in previously understood examples. We observe an analogous phenomenon in Theorem~\ref{thm_summary} below. In our arguments, the two constants arise naturally as Lagrange multipliers for $S|_{\mathcal{M}^+_{T}}$ and $S|_{\mathcal{M}^-_{T}}$.
\end{remark}

\subsection{The variational approach}\label{sec_variational}

The following result underpins our approach to the study of~\eqref{prescribed}.

\begin{proposition}\label{variational_lemma}
Let $T$ be a left-invariant symmetric (0,2)-tensor field on $G$. A metric $g \in\mathcal{M}_{K}$ satisfies~(\ref{prescribed}) for some $c \in \RR$ if and only if it is (up to scaling) a critical point of~$S |_{\mathcal{M}^{+}_{T}}$, $S |_{\mathcal{M}^{-}_{T}}$ or~$S |_{\mathcal{M}^{0}_{T}}$.
\end{proposition}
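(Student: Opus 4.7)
My plan is to apply the method of Lagrange multipliers in the global coordinates $(\beta,\alpha_1,\dots,\alpha_{r+s})\in(0,\infty)^{r+s+1}$ on $\mathcal{M}_K$ supplied by Theorem~\ref{G}. A crucial preliminary is the observation that the Ricci tensor is invariant under constant rescaling: $\Ricci(\mu g)=\Ricci(g)$ for every $\mu>0$, whereas $\tr_{\mu g}T=\mu^{-1}\tr_gT$. Thus, for any $g\in\mathcal{M}_K$, the choice $\mu=|\tr_gT|$ (or any $\mu>0$ when $\tr_gT=0$) places a rescaling of $g$ into exactly one of $\mathcal{M}^+_{T},\mathcal{M}^-_{T},\mathcal{M}^0_{T}$, selected by the sign of $\tr_gT$; and $g$ satisfies \eqref{prescribed} if and only if its rescaling does. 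This handles the ``up to scaling'' qualifier, reducing the statement to showing that a metric lying in one of these three hypersurfaces is a critical point of the corresponding restriction of~$S$ precisely when it solves \eqref{prescribed}.

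Next I would observe that any $g\in\mathcal{M}_K$ has $\Ricci(g)$ proportional to $Q$ on each of $\pg,\kg_1,\dots,\kg_{r+s}$ and vanishing on the off-diagonal blocks by Theorem~\ref{Ric}; consequently \eqref{prescribed} admits solutions only when $T$ shares this structure, $T=T_\pg Q|_\pg+\sum_{j=1}^{r+s}T_jQ|_{\kg_j}$, and I may assume this throughout. In the coordinates above,
\begin{align*}
\tr_g T=\frac{nT_\pg}{\beta}+\sum_{j=1}^{r+s}\frac{d_jT_j}{\alpha_j},
\end{align*}
and a direct differentiation of the formula for $S$ in Corollary~\ref{scalar}, simplified using the identity $n=2\sum_jd_j(1-\kappa_j)$ of \eqref{sum_dkappan}, produces
\begin{align*}
\partial_\beta S=-\frac{n}{\beta^2}R_\pg,\qquad\partial_{\alpha_j}S=-\frac{d_j}{\alpha_j^2}R_j,
\end{align*}
where $R_\pg$ and $R_j$ are the scalar coefficients of $Q|_\pg$ and $Q|_{\kg_j}$ in $\Ricci(g)$ as read off from Theorem~\ref{Ric}. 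The analogous derivatives of $\tr_g T$ are $-nT_\pg/\beta^2$ and $-d_jT_j/\alpha_j^2$; in short, the gradients of $S$ and of $\tr_g T$ coincide after replacing each Ricci coefficient $R_\bullet$ by the corresponding $T_\bullet$.

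Because $T\ne 0$ forces $\dd(\tr_g T)$ never to vanish, each non-empty constraint set is a smooth hypersurface and the Lagrange multiplier theorem applies. The critical point condition $\dd S=\lambda\,\dd(\tr_g T)$ on the hypersurface therefore translates, via the identifications above, into $R_\pg=\lambda T_\pg$ together with $R_j=\lambda T_j$ for every~$j$, which is precisely $\Ricci(g)=\lambda T$. Combined with the scaling remark of the first paragraph, both directions of the equivalence follow with the constant $c$ of \eqref{prescribed} given by the Lagrange multiplier~$\lambda$. The main subtlety I anticipate is not computational---each identity above is essentially forced by Corollary~\ref{scalar} and Theorem~\ref{Ric}---but rather the careful bookkeeping of the three cases $\mathcal{M}^+_{T},\mathcal{M}^-_{T},\mathcal{M}^0_{T}$ and their relation to the sign of $\tr_gT$: one must verify that the three rescaled constraint surfaces together cover every scale-class in $\mathcal{M}_K$, so that no solution of \eqref{prescribed} is overlooked by the variational viewpoint.
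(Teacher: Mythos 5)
Your argument is correct and follows essentially the same route as the paper: both identify the differential of $S$ in the coordinates $(\beta,\alpha_1,\dots,\alpha_{r+s})$ with $-\Ricci(g)$ paired against the variation (the paper writes this invariantly as $dS_g(h)=-g(\Ricci(g),h)$, you as a coordinate Lagrange-multiplier computation), and both dispose of the scaling ambiguity via $\Ricci(\mu g)=\Ricci(g)$ and $\tr_{\mu g}T=\mu^{-1}\tr_gT$. Your standing reduction to $T$ block-proportional to $Q$ matches the implicit assumption the paper also makes at the final step, so no substantive difference remains.
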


\begin{proof}
Denote by $\tca$ the space of left-invariant bilinear form fields
\begin{equation*}
\gamma Q|_{\pg} + \gamma_1 Q|_{\kg_1} + \ldots + \gamma_{r+s} Q|_{\kg_{r+s}}
\end{equation*}
with $\gamma,\gamma_1,\ldots,\gamma_{r+s}\in\mathbb R$. Theorem~\ref{Ric} shows that $\Ricci(g)$ lies in~$\tca$. We identify $\tca$ with the space tangent to~$\mathcal M_K$ at~$g$ in the natural way. The left-invariant bilinear form fields
\begin{align*}
Q_{\pg}=\pi_{\pg}^*Q,\qquad Q_i=\pi_{\kg_i}^*Q,\qquad i=1,\ldots,r+s,
\end{align*}
where $*$ denotes pullback, make a basis of~$\tca$.

Suppose $g$ is given by~\eqref{metric}. Using Corollary~\ref{scalar}, \eqref{sum_dkappan} and Theorem~\ref{Ric}, we find that the differential of the scalar curvature functional $S$ satisfies
\begin{align*}
d S_g (Q_{\pg})&=\frac{\partial}{\partial \beta} \bigg(-\unc \sum_{i=1}^{r+s} \frac{\alpha_i}{\beta^2} d_i (1-\kappa_i) -  \frac{n}{2 \beta}\bigg)
\\ 
&= \frac{1}{\beta^2}\sum_{i=1}^{r+s}\Big(\frac{\alpha_i}{2\beta}+1\Big)d_i (1-\kappa_i)=- g( \Ricci (g), Q_{\pg} ), 
\\ d S_g (Q_j) &= \frac{\partial}{\partial \alpha_j}\bigg( -\unc \sum_{i=1}^{r+s} \frac{\alpha_i}{\beta^2} d_i (1-\kappa_i)+\unc \sum_{i=1}^{r} \frac{d_i\kappa_i}{\alpha_i} \bigg) \\
&=-\frac{d_j}{4\alpha_j^2} \Big(\frac{\alpha_j^2}{4 \beta^2}(1-\kappa_j)+\kappa_j\Big)=- g( \Ricci (g), Q_j), \qquad j=1, \ldots, r+s.
\end{align*}
(We preserve the notation~$g$ for the inner product induced by $g$ on the tensor bundle over~$G$.) Consequently,
\begin{equation}\label{sc=Ric}
d S_g (h) =-g(\Ricci (g),h)
\end{equation}
for all $h \in \tca$.

Let us scale $g$ by the factor
\begin{align*}
\tau=
\begin{cases}
|\tr_gT| &\mbox{if}~\tr_gT\ne0, \\
1 &\mbox{if}~\tr_gT=0.
\end{cases}
\end{align*}
Clearly, $\tau g$ lies in~$\mathcal M_T^\sigma$ for some $\sigma\in\{+,-,0\}$. The space tangent to $\mathcal M^\sigma_T$ at $\tau g$ consists of those $h \in\tca $ that satisfy $g(T,h)=0$. Formula~\eqref{sc=Ric} implies that $d S_{\tau g}$ vanishes on this space if and only if $g$ satisfies~\eqref{prescribed}.
\end{proof}

By Theorem~\ref{Ric}, the constant $c$ in Proposition~\ref{variational_lemma} must be positive if $T$ is given by~\eqref{T} with $T_{\pg},T_1,\ldots,T_{r+s}>0$. The above proof shows that one may think of $c$ as a Lagrange multiplier.

\subsection{Global maxima on $\mca_{T}^{+}$}\label{sec_+}

Our goal in this subsection is to show that simple inequalities for $T$ guarantee the existence of a critical point of~$S|_{\mca_{T}^{+}}$.

\begin{theorem}\label{sufcon+}
Suppose $T$ is a left-invariant (0,2)-tensor field on $G$ satisfying~(\ref{T}) for some $T_{\pg},T_1,\ldots,T_{r+s}>0$. Choose an index $m$ such that 
\begin{align}\label{def_index_m}
\frac{\kappa_m}{T_m}=\max_{i=1,\ldots,r} \frac {\kappa_i}{T_i}.
\end{align}
If 
\begin{equation}\label{hyp_thm_beta_inf+}
\frac{\kappa_m\tr_Q T}{T_m}< \dim \kg + d_m (1-\kappa_m) - 3 n
\end{equation}
and
\begin{equation}\label{hyp_thm_beta_zero+}
\sum_{i=1}^{r+s} \frac{n^2\kappa_iT_{\mathfrak p}^2- d_i^2(1-\kappa_i)T_i^2}{nT_{\mathfrak p}T_i}-2n<0,
\end{equation}
then the functional $S |_{\mathcal{M}^{+}_{T}}$ attains its global maximum.
\end{theorem}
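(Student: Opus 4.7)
The plan is to apply the direct method of the calculus of variations. After identifying $\mca_K$ with $(0,\infty)^{r+s+1}$ via the parameters $(\beta,\alpha_1,\ldots,\alpha_{r+s})$, the set $\mca_T^+$ becomes the smooth hypersurface cut out by
\[
-\frac{nT_{\pg}}{\beta}+\sum_{i=1}^{r+s}\frac{d_iT_i}{\alpha_i}=1,
\]
which comes from $\tr_g T=1$, Theorem~\ref{Ric} and~\eqref{T}. The goal is to show that $S|_{\mca_T^+}$ is bounded above and that every maximising sequence admits a subsequence whose parameters stay in a compact subset of $(0,\infty)^{r+s+1}$; then continuity of $S$ and closedness of $\mca_T^+$ inside $\mca_K$ will provide the global maximum.

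First I would prove that $S$ is bounded above on $\mca_T^+$. By Corollary~\ref{scalar}, the only term capable of becoming large and positive is $\tfrac14\sum_{i=1}^r d_i\kappa_i/\alpha_i$. Using $\kappa_{r+1}=0$ (if $s=1$) and $\kappa_i/T_i\le\kappa_m/T_m$ for $i=1,\ldots,r$ from the choice of $m$, the constraint yields
\[
\tfrac14\sum_{i=1}^r\frac{d_i\kappa_i}{\alpha_i}\le \frac{\kappa_m}{4T_m}\sum_{i=1}^{r+s}\frac{d_iT_i}{\alpha_i}=\frac{\kappa_m}{4T_m}\Big(1+\frac{nT_{\pg}}{\beta}\Big),
\]
which, together with the strictly negative terms $-\tfrac14\sum_i\alpha_id_i(1-\kappa_i)/\beta^2$ and $-n/(2\beta)$ in $S$, gives a finite upper bound after careful balancing.

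Next I would rule out boundary escape along a maximising sequence $g_n$. The possible escape routes are $\beta_n\to 0$, $\beta_n\to\infty$, or $\alpha_i^n\to 0$ or $\infty$, possibly in combination; for each the plan is to bound $\limsup S(g_n)$ from above using the constraint and Corollary~\ref{scalar}, and then exhibit a test metric $g_0\in\mca_T^+$ whose scalar curvature strictly exceeds that bound---contradicting the maximising property and forcing $g_n$ to remain in a compact region. Escapes in which only an $\alpha_i$ diverges while $\beta$ is controlled should push $S$ to $-\infty$ directly, through the strictly negative first and second terms of $S$ together with the constraint-induced compensation in the other parameters. The two delicate routes are $\beta\to\infty$ (where $S$ limits up to $\kappa_m/(4T_m)$ once the constraint concentrates the $\alpha_i$'s on the index $m$) and $\beta\to 0$ (with the $\alpha_i$'s rescaled to preserve the constraint); the hypotheses~\eqref{hyp_thm_beta_inf+} and~\eqref{hyp_thm_beta_zero+} are precisely the inequalities guaranteeing the existence of test metrics that beat these two boundary limits.

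The main obstacle is this boundary case analysis. With $r+s+1$ parameters coupled non-trivially by the constraint, many escape scenarios must be controlled simultaneously at potentially different rates, and the two stated hypotheses must be shown to be exactly what is required to kill the two most dangerous routes. Identifying sharp enough bounds to reduce the diversity of boundary cases to these two inequalities, and constructing test metrics that realise the required strict inequalities, is the technical heart of the proof; I expect it to be encapsulated in the referenced Lemma~\ref{lemmacompact+}.
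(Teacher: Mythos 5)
Your proposal follows essentially the same route as the paper: the key estimate $\tfrac14\sum_{i=1}^{r}d_i\kappa_i/\alpha_i\le\tfrac{\kappa_m}{4T_m}\big(1+nT_{\pg}/\beta\big)$, the use of~\eqref{hyp_thm_beta_zero+} to force $S<0$ along the $\beta\to0$ escape, and the use of~\eqref{hyp_thm_beta_inf+} to produce a test metric beating the boundary value $\kappa_m/(4T_m)$ attained as $\beta\to\infty$ are precisely the ingredients of the paper's Lemma~\ref{lemmacompact+} and of its proof of Theorem~\ref{sufcon+}. The paper realises your anticipated test metric as an explicit curve $g_t$ with $\alpha_m$ kept bounded and all other parameters equal to $t$, along which~\eqref{hyp_thm_beta_inf+} makes $\frac{d}{dt}S(g_t)$ eventually negative, so that $S(g_{t_0})$ overshoots the limit $\kappa_m/(4T_m)$ at some finite~$t_0$.
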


The proof of Theorem~\ref{sufcon+} requires the following estimate for $S |_{\mathcal{M}^{+}_{T}}$.

\begin{lemma}\label{lemmacompact+}
Let $m$ be as in~(\ref{def_index_m}). Assume that~(\ref{hyp_thm_beta_zero+}) holds. Given $\epsilon > 0$, there exists a compact set $\mathcal C_\epsilon^+\subset\mathcal M^+_T$ such that
\begin{equation}\label{estimate+}
S(g) < \frac{\kappa_m}{4T_m} + \epsilon
\end{equation}
for every $g \in \mathcal{M}^+_{T} \setminus\cca_\epsilon^+$.
\end{lemma}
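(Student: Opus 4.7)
The plan is to parametrise $\mca_T^+$ by $(\beta,\alpha_1,\ldots,\alpha_{r+s})$ with $\beta,\alpha_i>0$ satisfying $\sum_i d_iT_i/\alpha_i-nT_\pg/\beta=1$, and to show that $S(g)<\kappa_m/(4T_m)+\epsilon$ once the parameters leave a sufficiently large compact region. The starting estimate comes from the definition of $m$ in \eqref{def_index_m}: since $\kappa_i/T_i\leq\kappa_m/T_m$ for $i\leq r$ and $\kappa_{r+1}=0$ when $s=1$, one has $\sum_{i=1}^{r}d_i\kappa_i/\alpha_i\leq(\kappa_m/T_m)(1+nT_\pg/\beta)$, and substituting into Corollary~\ref{scalar} yields the master inequality
\[
S(g)\leq\frac{\kappa_m}{4T_m}+\frac{n}{4\beta}\Big(\frac{\kappa_mT_\pg}{T_m}-2\Big)-\frac{1}{4\beta^2}\sum_i\alpha_id_i(1-\kappa_i).
\]

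A case analysis then handles sequences $(g_k)\subset\mca_T^+$ that leave every compact subset. After extracting subsequences, each $\beta_k$ and $\alpha_i^k$ converges in $[0,\infty]$. If $\beta_k\to\infty$, the constraint keeps every $\alpha_i^k$ bounded below, so both of the extra terms in the above inequality vanish and $\limsup_kS(g_k)\leq\kappa_m/(4T_m)$. If $\beta_k$ remains bounded while some $\alpha_j^k\to\infty$, the last term pushes $S(g_k)$ to $-\infty$. Both subcases yield $S(g_k)<\kappa_m/(4T_m)+\epsilon$ for all sufficiently large $k$.

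The crucial case is $\beta_k\to 0$, which by the constraint forces some $\alpha_i^k\to 0$. Introducing $\gamma_i=\alpha_i/\beta$, Corollary~\ref{scalar} produces the exact identity
\[
S(g)=\frac{\Phi(\gamma)}{4\beta},\qquad\Phi(\gamma):=\sum_{i=1}^{r}\frac{d_i\kappa_i}{\gamma_i}-\sum_{i=1}^{r+s}\gamma_id_i(1-\kappa_i)-2n,
\]
and the constraint transforms into $\sum d_iT_i/\gamma_i=\beta+nT_\pg$. Changing variables to $\delta_i=1/\gamma_i$ renders $\Phi$ concave on the affine constraint set, so its supremum is attained at a unique interior Lagrange critical point. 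A direct computation identifies the left-hand side of \eqref{hyp_thm_beta_zero+} with $\Phi$ evaluated at the distinguished point $\gamma_i^{\ast}=d_iT_i/(nT_\pg)$; granted \eqref{hyp_thm_beta_zero+}, one deduces $\sup\Phi<0$ over the limiting surface $\{\sum d_iT_i/\gamma_i=nT_\pg\}$, and consequently $S=\Phi/(4\beta)\to-\infty$ uniformly as $\beta_k\to 0$---much stronger than what is required.

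The main obstacle lies precisely in this last step: the point $\gamma^{\ast}$ does not itself lie on the limiting surface (in fact $\sum d_iT_i/\gamma_i^{\ast}=(r+s)nT_\pg$), so passing from $\Phi(\gamma^{\ast})<0$ to negativity of $\sup\Phi$ over the limit surface requires explicit manipulation of the Lagrange equations. One characterises the critical point via the multiplier $\lambda>\kappa_m/T_m$ determined implicitly by $\sum d_iT_i\sqrt{(1-\kappa_i)/(\lambda T_i-\kappa_i)}=nT_\pg$, expresses $\sup\Phi$ in the closed form $\lambda nT_\pg-2\sum d_i\sqrt{(1-\kappa_i)(\lambda T_i-\kappa_i)}-2n$, and relates this to \eqref{hyp_thm_beta_zero+} through a weighted AM--GM argument. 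This algebraic comparison is the principal technical challenge of the lemma.
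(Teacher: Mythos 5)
Your overall architecture matches the paper's: parametrise $\mca_T^+$ by $(\beta,\alpha_1,\ldots,\alpha_{r+s})$, use the constraint $\sum_i d_iT_i/\alpha_i = 1 + nT_\pg/\beta$, and split into the regimes $\beta$ large (where the master inequality built from \eqref{def_index_m} gives $S<\kappa_m/(4T_m)+\epsilon$), some $\alpha_j$ large with $\beta$ bounded (where $S\to-\infty$), and $\beta$ small. Your identification of the left-hand side of \eqref{hyp_thm_beta_zero+} with $\Phi(\gamma^*)$ at $\gamma_i^*=d_iT_i/(nT_\pg)$ is correct, and the formula $S=\Phi(\gamma)/(4\beta)$ checks out. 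However, the proof is incomplete exactly where you say it is: the deduction ``$\Phi(\gamma^*)<0$ implies $\sup\Phi<0$ over the limiting surface $\{\sum_i d_iT_i/\gamma_i=nT_\pg\}$'' is asserted, correctly flagged as nontrivial (since $\gamma^*$ lies on the surface $\sum_i d_iT_i/\gamma_i=(r+s)nT_\pg$, not the limiting one), and then only sketched via a Lagrange-multiplier computation and an unspecified ``weighted AM--GM argument.'' Since this is the only place where hypothesis \eqref{hyp_thm_beta_zero+} enters, and the entire lemma hinges on it, the argument as written does not establish the result. A secondary issue is that your case analysis works surface-by-surface at fixed $\beta$, whereas the conclusion needs the bound to hold uniformly for all small $\beta>0$, not just in the limit $\beta\to0$; this must be addressed explicitly when the key step is filled in.

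The gap can be closed far more simply than your Lagrange route, and this is essentially what the paper does. In the variables $\delta_i=1/\gamma_i=\beta/\alpha_i$ one has $\Phi=\sum_{i=1}^{r}d_i\kappa_i\delta_i-\sum_{i=1}^{r+s}d_i(1-\kappa_i)/\delta_i-2n$, which is \emph{coordinatewise increasing} in each $\delta_i>0$ (each partial derivative is $d_i\kappa_i+d_i(1-\kappa_i)/\delta_i^2>0$). Since all terms of the constraint $\sum_i d_iT_i\delta_i=\beta+nT_\pg$ are positive, every admissible point satisfies $\delta_i\le(\beta+nT_\pg)/(d_iT_i)$ componentwise, whence
\begin{equation*}
\Phi(\delta)\le\sum_{i=1}^{r+s}\Big(\kappa_i\frac{\beta+nT_\pg}{T_i}-\frac{d_i^2(1-\kappa_i)T_i}{\beta+nT_\pg}\Big)-2n,
\end{equation*}
and the right-hand side is continuous in $\beta$ and equals the left-hand side of \eqref{hyp_thm_beta_zero+} at $\beta=0$. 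This gives $\sup\Phi<0$ uniformly for all $\beta$ below an explicit threshold $\Lambda_0^+$ determined by \eqref{hyp_thm_beta_zero+}, hence $S=\Phi/(4\beta)<0$ there, with no need for the implicit multiplier $\lambda$, the closed form for $\sup\Phi$, or any AM--GM comparison. With that substitution your proof becomes complete and coincides in substance with the paper's.
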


\begin{proof}
Consider a metric $g\in\mathcal M^+_T$ satisfying~\eqref{metric}.  The definition of $\mathcal M^+_T$ implies
\begin{align}\label{trace=1}
\tr_gT=-\frac{nT_{\mathfrak p}}\beta+\sum_{i=1}^{r+s}\frac{d_iT_i}{\alpha_i}=1.
\end{align}
For $\epsilon>0$, denote
\begin{align*}
\Lambda_\infty^+(\epsilon)=\frac{n\kappa_mT_{\mathfrak p}}{4T_m\epsilon}.
\end{align*}
In view of Corollary~\ref{scalar} and formula~\eqref{trace=1}, if $\beta>\Lambda_\infty^+(\epsilon)$, then
\begin{equation*}
	S(g) < \unc \sum_{i=1}^{r} \frac{d_i\kappa_i}{\alpha_i}\le\frac{\kappa_m}{4T_m}\sum_{i=1}^{r+s} \frac{d_iT_i}{\alpha_i}=\frac{\kappa_m}{4T_m}\bigg(1+\frac{nT_{\mathfrak p}}\beta\bigg)<\frac{\kappa_m}{4T_m}+\epsilon.
	\end{equation*}
Thus, in this case, estimate~\eqref{estimate+} holds.

Denote
\begin{align*}
\Lambda_0^+=\unm\bigg(\sum_{i=1}^{r+s}\frac{n^2\kappa_iT_{\pg}^2+d_i^2(1-\kappa_i)T_i^2}{n^2T_{\mathfrak p}^2T_i}\bigg)^{-1}\bigg|\sum_{i=1}^{r+s} \frac{n^2\kappa_iT_{\mathfrak p}^2-d_i^2 (1-\kappa_i)T_i^2}{nT_{\mathfrak p}T_i}-2n\bigg|.
\end{align*}
Formula~\eqref{trace=1} implies
\begin{align*}
\frac{\beta}{\alpha_j}<\frac\beta{d_jT_j}\sum_{i=1}^{r+s}\frac{d_iT_i}{\alpha_i}=\frac{\beta+nT_{\mathfrak p}}{d_jT_j},\qquad j=1,\ldots,r+s.
\end{align*}
Invoking Corollary~\ref{scalar} again, we find
\begin{align*}
S(g) &=\frac1{4\beta}\bigg(\sum_{i=1}^{r+s} \Big(d_i\kappa_i\frac{\beta}{\alpha_i}-d_i (1-\kappa_i)\frac{\alpha_i}{\beta}\Big)-2n \bigg)
\\
& <\frac1{4\beta}\bigg(\sum_{i=1}^{r+s} \bigg(\kappa_i\frac{\beta+nT_{\mathfrak p}}{T_i}-\frac{d_i^2(1-\kappa_i)T_i}{\beta+nT_{\mathfrak p}}\bigg)-2n \bigg)
\\
& =\frac1{4\beta}\bigg(\sum_{i=1}^{r+s} \bigg(\frac{n\kappa_iT_{\mathfrak p}}{T_i}-\frac{d_i^2(1-\kappa_i)T_i}{nT_{\mathfrak p}}\bigg)-2n+\beta\sum_{i=1}^{r+s} \bigg(\frac{\kappa_i}{T_i}+\frac{d_i^2(1-\kappa_i)T_i}{nT_{\mathfrak p}(\beta+nT_{\mathfrak p})}\bigg)\bigg)
\\
&<\frac1{4\beta}\bigg(\sum_{i=1}^{r+s} \frac{n^2\kappa_iT_{\mathfrak p}^2-d_i^2(1-\kappa_i)T_i^2}{nT_{\mathfrak p}T_i}-2n+\beta\sum_{i=1}^{r+s}\frac{n^2\kappa_iT_{\mathfrak p}^2+d_i^2(1-\kappa_i)T_i^2}{n^2T_{\mathfrak p}^2T_i}\bigg).
\end{align*}
In view of~\eqref{hyp_thm_beta_zero+}, if~$\beta<\Lambda_0^+$, then
\begin{align*}
S(g) <\frac1{8\beta}\bigg(\sum_{i=1}^{r+s} \frac{n^2\kappa_iT_{\mathfrak p}^2-d_i^2(1-\kappa_i)T_i^2}{nT_{\mathfrak p}T_i}-2n\bigg)<0.
\end{align*}
In this case, again, estimate~\eqref{estimate+} holds.

Choose $p$ and $q$ such that
\begin{align}\label{nota_aqTp}
d_pT_p=\min_{i=1,\ldots,r+s}d_iT_i,\qquad \alpha_q=\min_{i=1,\ldots,r+s}\alpha_i.
\end{align}
Denote
\begin{align*}
\Gamma_0^+=\unm d_pT_p\min\Big\{1,\frac{\Lambda_0^+}{nT_{\mathfrak p}}\Big\}.
\end{align*}
By~\eqref{trace=1}, if $\alpha_q<\Gamma_0^+$, then
\begin{align*}
\beta=nT_{\mathfrak p}\bigg(\sum_{i=1}^{r+s}\frac{d_iT_i}{\alpha_i}-1\bigg)^{-1}<nT_{\mathfrak p}\Big(\frac{d_qT_q}{\alpha_q}-1\Big)^{-1}\le \frac{nT_{\mathfrak p}\alpha_q}{d_pT_p-\alpha_q}\le \frac{2nT_{\mathfrak p}\alpha_q}{d_pT_p}\le\Lambda_0^+,
\end{align*}
which means~\eqref{estimate+} holds.

Choose $l$ such that
\begin{align}\label{nota_al}
\alpha_l=\max_{i=1,\ldots,r+s}\alpha_i.
\end{align}
For $\epsilon>0$, denote
\begin{align*}
\Gamma_\infty^+(\epsilon)=\frac{2\Lambda_\infty^+(\epsilon)^2}{\min_{i=1,\ldots,r+s} d_i (1-\kappa_i)}\sum_{i=1}^{r} \frac{d_i\kappa_i}{\Gamma_0^+}.
\end{align*}
As we showed above, if $\beta>\Lambda_\infty^+(\epsilon)$ or $\alpha_q<\Gamma_0^+$, then~\eqref{estimate+} holds. Assuming $\alpha_q\ge\Gamma_0^+$ and $\alpha_l>\Gamma_\infty^+(\epsilon)$, we find
\begin{align*}
S(g) &<-\unc \sum_{i=1}^{r+s} \frac{\alpha_i}{\beta^2} d_i (1-\kappa_i)+ \unc \sum_{i=1}^{r} \frac{\kappa_i d_i}{\alpha_i} < -\frac{\alpha_l}{4\beta^2} d_l (1-\kappa_l) + \unc \sum_{i=1}^{r} \frac{\kappa_i d_i}{\alpha_q}
\\
&\le-\frac{\Gamma_\infty^+(\epsilon)}{4\Lambda_\infty^+(\epsilon)^2} \min_{i=1,\ldots,r+s}d_i (1-\kappa_i) + \unc \sum_{i=1}^{r} \frac{\kappa_i d_i}{\Gamma_0^+}<-\unc \sum_{i=1}^{r} \frac{\kappa_i d_i}{\Gamma_0^+}<0.
\end{align*}
Thus, the inequality $\alpha_l>\Gamma_\infty^+(\epsilon)$ implies~\eqref{estimate+}.

Let $\mathcal C_{\epsilon}^+$ be the set of metrics $g\in\mathcal M_T^+$ satisfying~\eqref{metric} with
\begin{align*}
\min\{\Lambda_0^+,\Gamma_0^+\}\le\min\{\beta,\alpha_1,\ldots,\alpha_{r+s}\}\le\max\{\beta,\alpha_1,\ldots,\alpha_{r+s}\}\le\max\{\Lambda_\infty^+(\epsilon),\Gamma_\infty^+(\epsilon)\}.
\end{align*}
Clearly, this set is compact. Summarising the arguments above, we conclude that~\eqref{estimate+} holds for all~$g\in\mathcal M_T^+\setminus\mathcal C_{\epsilon}^+$.
\end{proof}

With Lemma~\ref{lemmacompact+} at hand, we can prove Theorem~\ref{sufcon+} using the approach from~\cite[Proof of Theorem~3.3]{APZ20}. The main idea behind this approach goes back to~\cite{MGAP18}.

\begin{proof}[Proof of Theorem \ref{sufcon+}]
Denote $U= \tr_QT -d_mT_m$. For $t>U$, consider the metric $g_t\in\mca_{K}$ satisfying
\begin{align*}
g_t=  t Q|_{\pg} &+  t Q|_{\kg_1} + \cdots +  t Q|_ {\kg_{m-1}} 
\\ &+ \phi(t)Q|_{\kg_m} +  tQ|_ {\kg_{m+1}} + \cdots +   tQ|_ {\kg_{r+s}}, 
\qquad 
\phi(t) = \frac {d_mT_mt}{t- U}.
\end{align*}
Straightforward verification shows that 
$g_t$ lies in~$\mca_{T}^+$. 
By Corollary~\ref{scalar},
\begin{align*}
S(g_t) &=  \frac1{4t}d_m (1-\kappa_m) -  \frac{\phi(t)}{4t^2} d_m (1-\kappa_m) - \frac{1}{4t} \sum_{i=1}^{r+s} d_i (1-\kappa_i)
\\ &\hphantom{=}~- \frac{n}{2t} + \frac{1}{4t} \sum_{i=1}^{r} \kappa_i d_i - \frac{\kappa_md_m}{4t} + \frac{\kappa_md_m}{4\phi(t)} \\
&=  \frac1{4t}d_m (1-\kappa_m) -  \frac{\phi(t)}{4t^2} d_m (1-\kappa_m)- \frac{3n}{4t} +\frac{\dim\mathfrak k}{4t} - \frac{\kappa_md_m}{4t} + \frac{\kappa_md_m}{4\phi(t)}.
\end{align*}
Furthermore, in light of~\eqref{hyp_thm_beta_inf+},
\begin{align}\label{t2St}
4\lim_{t\to\infty}t^2\frac{d}{dt} S(g_t) & = -d_m(1-\kappa_m) - \dim \kg + 3 n +\kappa_md_m+ \frac{\kappa_mU}{ T_m}  \notag 
\\
& = -d_m(1-\kappa_m) - \dim \kg + 3 n + \frac{\kappa_m \tr_QT}{ T_m}  < 0.
\end{align}
We conclude that $\frac{d}{dt} S(g_t)<0$ for sufficiently large $t$, which implies the existence of $t_0\in(U,\infty)$ such that
\[
S(g_{t_0}) >\lim_{t \to \infty} S(g_t) = \frac{\kappa_m}{4T_m}.
\]
Using Lemma \ref{lemmacompact+} with 
\[
\epsilon =  \unm\Big(S(g_{t_0}) - \frac{\kappa_m}{4T_m}\Big)>0
\] yields
\begin{equation}\label{maxcompact}
S(h) < \frac{\kappa_m}{4T_m}+\epsilon=\unm S(g_{t_0})+\frac{\kappa_m}{8T_m}< S(g_{t_0}),\qquad h\in \mathcal{M}^{+}_{T}\setminus\cca_\epsilon^+.
\end{equation} 
Since $\cca_\epsilon^+$ is compact, the functional $S|_{\mathcal{C}^{+}_{\epsilon}}$ attains its global maximum at some ${g_{\mathrm{mx}} \in \cca_\epsilon^+}$. Obviously, $g_{t_0}$ lies in $\cca_\epsilon^+$. Therefore, by~\eqref{maxcompact}, $S(h) \leq S(g_{\mathrm{mx}})$ for all $h\in\mathcal{M}^{+}_{T}$.
\end{proof}

\subsection{Global maxima on $\mca_{T}^{-}$}\label{sec_-}

Now we focus on the space $\mca_{T}^{-}$.

\begin{theorem}\label{sufcon-}
Suppose $T$ is a left-invariant (0,2)-tensor field on $G$ satisfying~(\ref{T}) for some $T_{\pg},T_1,\ldots,T_{r+s}>0$. If condition~(\ref{hyp_thm_beta_zero+}) holds, then the functional $S |_{\mathcal{M}^{-}_{T}}$ attains its global maximum.
\end{theorem}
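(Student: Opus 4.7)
The plan is to adapt the argument from Theorem~\ref{sufcon+} to $\mca_T^-$, exploiting a key simplification. For every $g \in \mca_T^-$ given by~(\ref{metric}), the constraint $\tr_g T = -1$ reads $nT_{\pg}/\beta = 1 + \sum_{i=1}^{r+s} d_iT_i/\alpha_i$, which forces $\beta \in (0, nT_{\pg})$. Thus there is no ``$\beta \to \infty$'' regime to worry about, which explains why the statement only invokes~(\ref{hyp_thm_beta_zero+}) and not an analogue of~(\ref{hyp_thm_beta_inf+}). Moreover, $\mca_T^-$ is non-empty, since any $(\alpha_1,\ldots,\alpha_{r+s}) \in (0,\infty)^{r+s}$ paired with $\beta := nT_{\pg}/(1+\sum_i d_iT_i/\alpha_i)$ defines a metric in $\mca_T^-$.

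The main step is to prove an analogue of Lemma~\ref{lemmacompact+}: $S|_{\mca_T^-}$ is proper from above, in the sense that $S(g)\to-\infty$ as $g$ leaves any compact subset of $\mca_T^-$. There are two asymptotic regimes. First, suppose $\beta\to 0$. The constraint yields $\alpha_j/\beta \ge d_jT_j/(nT_{\pg})$ and $\beta/\alpha_j \le (nT_{\pg}-\beta)/(d_jT_j)$ for each $j$. Rewriting the formula from Corollary~\ref{scalar} (with the convention $\kappa_{r+1}:=0$ when $s=1$) as
\begin{align*}
S(g) = \frac{1}{4\beta}\bigg(\sum_{i=1}^{r+s}\Big(\beta\frac{d_i\kappa_i}{\alpha_i} - \frac{\alpha_i d_i(1-\kappa_i)}{\beta}\Big) - 2n\bigg)
\end{align*}
and substituting these bounds term by term produces
\begin{align*}
S(g) \le -\frac{1}{4}\sum_{i=1}^{r+s}\frac{\kappa_i}{T_i} + \frac{1}{4\beta}\bigg(\sum_{i=1}^{r+s}\frac{n^2\kappa_iT_{\pg}^2-d_i^2(1-\kappa_i)T_i^2}{nT_{\pg}T_i} - 2n\bigg).
\end{align*}
By hypothesis~(\ref{hyp_thm_beta_zero+}), the parenthesised quantity is strictly negative, so $S(g)\to-\infty$ as $\beta\to 0$. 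Second, suppose $\max_i\alpha_i = \alpha_l\to\infty$ while $\beta$ stays bounded away from $0$ (otherwise we reduce to the first regime). Then the term $-\alpha_l d_l(1-\kappa_l)/(4\beta^2)$ from Corollary~\ref{scalar} drives $S(g)$ to $-\infty$, using $\beta\le nT_{\pg}$ and $d_l(1-\kappa_l)>0$; the remaining terms stay bounded, because a positive lower bound on $\beta$ forces, via the constraint, a positive lower bound on every $\alpha_i$.

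With properness established, the argument concludes cleanly: fix any $g_0\in\mca_T^-$, observe that $\{g\in\mca_T^-:S(g)\ge S(g_0)\}$ lies in a compact set by the previous step, and apply continuity of $S$. The main obstacle is organising the $\beta\to 0$ estimate so that~(\ref{hyp_thm_beta_zero+}) appears as precisely the condition making the leading $1/\beta$ coefficient negative; the rest is routine bookkeeping. Notably, no auxiliary one-parameter family like the $g_t$ used in the proof of Theorem~\ref{sufcon+} is needed here, reflecting the fact that $\beta$-boundedness on $\mca_T^-$ already eliminates the asymptotic obstruction that required a separate construction in the $\mca_T^+$ case.
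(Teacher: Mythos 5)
Your proposal is correct and follows essentially the same route as the paper: the paper's Lemma~\ref{lemmacompact-} is exactly your properness statement, proved by the same case analysis ($\beta$ small via the bound $\beta/\alpha_j<(nT_\pg-\beta)/(d_jT_j)$ and hypothesis~(\ref{hyp_thm_beta_zero+}) making the $1/\beta$ coefficient negative; then $\alpha_l$ large with $\beta$ bounded between $0$ and $nT_\pg$), and the theorem is likewise concluded by comparing with a fixed reference metric. Your explicit verification that $\mca_T^-$ is non-empty is a point the paper leaves implicit.
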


The proof relies on the following estimate.

\begin{lemma}\label{lemmacompact-}
Assume that~(\ref{hyp_thm_beta_zero+}) holds. Given $\theta>0$, there exists a compact set $\mathcal C_\theta^-\subset\mathcal{M}^-_{T}$ such that $S(g) < -\theta$ for every $g \in \mathcal{M}^-_{T} \setminus\cca_\theta^-$.
\end{lemma}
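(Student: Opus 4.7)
The plan is to adapt the strategy of Lemma~\ref{lemmacompact+}, exploiting a structural simplification: the constraint defining $\mca^-_T$ automatically bounds $\beta$ from above. Indeed, if $g \in \mca^-_T$ has the form~(\ref{metric}), then $\tr_g T = -1$ rearranges to $\frac{nT_{\pg}}{\beta} = 1 + \sum_{i=1}^{r+s}\frac{d_iT_i}{\alpha_i}$, which forces $\beta < nT_{\pg}$. This eliminates the $\beta \to \infty$ regime that had to be treated separately for $\mca^+_T$ (where it produced the finite supremum $\tfrac{\kappa_m}{4T_m}$), and it allows the stronger conclusion $S(g) \to -\infty$ as $g$ leaves compact subsets of $\mca^-_T$.

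The first step is to bound $S$ when $\beta$ is small. The trace constraint yields $\frac{\beta}{\alpha_i} < \frac{nT_{\pg}}{d_iT_i}$ and $\frac{\alpha_i}{\beta} > \frac{d_iT_i}{nT_{\pg}}$ for every~$i$. Substituting these into the formula of Corollary~\ref{scalar} gives
\begin{align*}
S(g) < \frac{1}{4\beta}\bigg(\sum_{i=1}^{r+s}\frac{n^2\kappa_iT_{\pg}^2 - d_i^2(1-\kappa_i)T_i^2}{nT_{\pg}T_i} - 2n\bigg).
\end{align*}
By hypothesis~(\ref{hyp_thm_beta_zero+}), the bracketed quantity is a negative constant~$-C$; hence $S(g) < -\tfrac{C}{4\beta}$, and setting $\Lambda_0^-(\theta) := \tfrac{C}{4\theta}$ enforces $S(g) < -\theta$ whenever $\beta < \Lambda_0^-(\theta)$.

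The second step handles $\beta \geq \Lambda_0^-(\theta)$. The same trace constraint then yields the uniform positive lower bound $\gamma_0 := \min_i \frac{d_iT_i\Lambda_0^-(\theta)}{nT_{\pg}-\Lambda_0^-(\theta)}$ on each~$\alpha_i$. Combining this with $\beta < nT_{\pg}$ in Corollary~\ref{scalar}, and dropping non-positive terms, gives
\begin{align*}
S(g) \leq -\frac{\alpha_l d_l(1-\kappa_l)}{4(nT_{\pg})^2} + \frac{1}{4\gamma_0}\sum_{i=1}^{r}d_i\kappa_i,
\end{align*}
where $\alpha_l := \max_i \alpha_i$. This tends to $-\infty$ as $\alpha_l \to \infty$, so one can choose $\Gamma_\infty^-(\theta)$ large enough that $\alpha_l > \Gamma_\infty^-(\theta)$ forces $S(g) < -\theta$.

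Finally, I take $\cca_\theta^-$ to consist of those $g \in \mca^-_T$ with $\beta \geq \Lambda_0^-(\theta)$ and $\alpha_i \leq \Gamma_\infty^-(\theta)$ for every~$i$; since $\beta < nT_{\pg}$ and $\alpha_i \geq \gamma_0$ hold automatically on this set, it is compact. Any metric outside $\cca_\theta^-$ satisfies either $\beta < \Lambda_0^-(\theta)$ or $\max_i \alpha_i > \Gamma_\infty^-(\theta)$, and in either case $S(g) < -\theta$ by the preceding two steps. The main technical obstacle is the small-$\beta$ estimate, which is where hypothesis~(\ref{hyp_thm_beta_zero+}) is essential; once that inequality is in hand, the coercive behaviour of $S$ follows from the automatic upper bound on~$\beta$ peculiar to $\mca^-_T$, making the remainder of the argument considerably cleaner than the corresponding analysis on $\mca^+_T$.
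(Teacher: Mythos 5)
Your argument is correct and follows essentially the same route as the paper's proof: the same small-$\beta$ estimate via~(\ref{hyp_thm_beta_zero+}) and the same large-$\alpha_l$ estimate exploiting the automatic bound $\beta<nT_{\pg}$, with your uniform lower bound $\gamma_0$ on the $\alpha_i$ being just the contrapositive of the paper's step showing that a small $\min_i\alpha_i$ forces $\beta<\Lambda_0^-(\theta)$. The only point worth adding is the trivial edge case $\Lambda_0^-(\theta)\ge nT_{\pg}$, in which $\gamma_0$ as written is undefined but $S(g)<-\theta$ already holds for every $g\in\mca_T^-$, so one may take $\cca_\theta^-=\emptyset$.
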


\begin{proof}
Let $g\in\mathcal M^+_T$ be a metric satisfying~\eqref{metric}. Then
\begin{align}\label{trace=-1}
\tr_gT=-\frac{nT_{\mathfrak p}}\beta+\sum_{i=1}^{r+s}\frac{d_iT_i}{\alpha_i}=-1,
\end{align}
which implies $\beta<nT_{\mathfrak p}$. Moreover,
\begin{align*}
\frac{\beta}{\alpha_j}<\frac\beta{d_jT_j}\sum_{i=1}^{r+s}\frac{d_iT_i}{\alpha_i}=\frac{nT_{\mathfrak p}-\beta}{d_jT_j}, \qquad j=1,\ldots,r+s.
\end{align*}
Given $\theta>0$, denote
\begin{align*}
\Lambda_0^-(\theta)=\frac1{4\theta}\bigg|\sum_{i=1}^{r+s} \frac{n^2\kappa_iT_{\mathfrak p}^2-d_i^2(1-\kappa_i)T_i^2}{nT_{\mathfrak p}T_i}-2n\bigg|.
\end{align*}
Corollary~\ref{scalar} implies
\begin{align*}
S(g)&<\frac1{4\beta}\bigg(\sum_{i=1}^{r+s} \bigg(\kappa_i\frac{nT_{\mathfrak p}-\beta}{T_i}-\frac{d_i^2(1-\kappa_i)T_i}{nT_{\mathfrak p}-\beta}\bigg)-2n \bigg)
\\
&<\frac1{4\beta}\bigg(\sum_{i=1}^{r+s} \bigg(\kappa_i\frac{nT_{\mathfrak p}}{T_i}-\frac{d_i^2(1-\kappa_i)T_i}{nT_{\mathfrak p}}\bigg)-2n \bigg)
\\
& =\frac1{4\beta}\bigg(\sum_{i=1}^{r+s} \frac{n^2\kappa_iT_{\mathfrak p}^2-d_i^2(1-\kappa_i)T_i^2}{nT_{\mathfrak p}T_i}-2n\bigg).
\end{align*}
By~\eqref{hyp_thm_beta_zero+}, if $\beta<\Lambda_0^-(\theta)$, then $S(g)<-\theta$.

Choose $p$ and $q$ as in~\eqref{nota_aqTp}. For $\theta>0$, denote
\begin{align*}
\Gamma_0^-(\theta)=\frac{d_pT_p\Lambda_0^-(\theta)}{nT_{\mathfrak p}}.
\end{align*}
If $\alpha_q<\Gamma_0^-(\theta)$, then
\begin{align*}
\beta=nT_{\mathfrak p}\bigg(\sum_{i=1}^{r+s}\frac{d_iT_i}{\alpha_i}+1\bigg)^{-1}<\frac{nT_{\mathfrak p}\alpha_q}{d_qT_q}\le \frac{nT_{\mathfrak p}\alpha_q}{d_pT_p}<\Lambda_0^-(\theta),
\end{align*}
which means $S(g)<-\theta$.

Choose $l$ as in~\eqref{nota_al}. For $\theta>0$, denote
\begin{align*}
\Gamma_\infty^-(\theta)=\frac{n^2T_{\mathfrak p}^2}{\min_{i=1,\ldots r+s} d_i (1-\kappa_i)}\bigg(4\theta+\sum_{i=1}^r\frac{\kappa_i d_i}{\Gamma_0^-(\theta)}\bigg).
\end{align*}
As shown above, if $\alpha_q<\Gamma_0^-(\theta)$, then $S(g)<-\theta$. Recalling that $\beta<nT_{\mathfrak p}$ and assuming that $\alpha_q\ge\Gamma_0^-(\theta)$ and $\alpha_l>\Gamma_\infty^-(\theta)$, we obtain
\begin{align*}
S(g) &< -\frac{\alpha_l}{4\beta^2} d_l(1-\kappa_l) + \unc \sum_{i=1}^{r} \frac{\kappa_i d_i}{\alpha_q}
\\
&\le-\frac{\Gamma_\infty^-(\theta)}{4n^2T_{\mathfrak p}^2} \min_{i=1,\ldots,r+s}d_i (1-\kappa_i) + \unc \sum_{i=1}^{r} \frac{\kappa_i d_i}{\Gamma_0^-(\theta)}<-\theta.
\end{align*}
Thus, the inequality $\alpha_l>\Gamma_\infty^-(\theta)$ implies $S(g)<-\theta$.

Let $\mathcal C_{\theta}^-$ be the set of those $g\in\mathcal M_T^+$ that satisfy~\eqref{metric} with
\begin{align*}
\min\{\Lambda_0^-(\theta),\Gamma_0^-(\theta)\}&\le\min\{\beta,\alpha_1,\ldots,\alpha_{r+s}\} \\ &\le\max\{\beta,\alpha_1,\ldots,\alpha_{r+s}\}\le\max\{nT_\pg,\Gamma_\infty^-(\theta)\}.
\end{align*}
This set is compact. By the arguments above, $S(g)<-\theta$ whenever $g$ lies in $\mathcal M_T^+\setminus\mathcal C_{\theta}^-$.
\end{proof}

\begin{proof}[Proof of Theorem \ref{sufcon-}]
Fix a metric $h\in\mathcal M_T^-$. Applying Lemma~\ref{lemmacompact-} with $\theta=|S(h)|+1$, we conclude that 
\begin{align*}
S(g) < -|S(h)|-1<S(h)
\end{align*}
for all $g\in\mathcal M_T^-$ outside a compact set~$\mathcal C_\theta^-\subset \mathcal M_T^-$. Clearly, $h$ lies in $\mathcal C_\theta^-$, and the functional $S|_{\mathcal C_\theta^-}$ attains its global maximum at some~$h_{\mathrm{mx}}\in\mathcal C_\theta^-$. This implies $S(g)\le S(h_{\mathrm{mx}})$ for all $g\in\mathcal M_T^-$.
\end{proof}

\subsection{Critical points on $\mca_{T}^{0}$}\label{sec_0}

If $r+s=1$ in formula~\eqref{dec_k}, then
\begin{align}\label{T1}
T =  - T_{\pg} Q|_{\pg} + T_1 Q|_{\kg_1} 
\end{align}
for some $T_\pg,T_1>0$. In this case, straightforward analysis shows that $S|_{\mathcal M_T^0}$ has no critical points unless
\begin{align}\label{cr_pt_M0_1}
d_1T_1^2 + 2nT_\pg (2T_1 - \kappa_1T_\pg)=0.
\end{align}
On the other hand, when~\eqref{cr_pt_M0_1} holds, the scalar curvature of every metric in $\mathcal M_T^0$ equals~0. If $r+s=2$, we are able to obtain a complete classification of the critical points of~$S|_{\mathcal M_T^0}$. We present this classification in Theorem~\ref{thm_M0} below. While its statement is quite bulky, its conditions are easy to verify once the tensor field $T$ and the geometric parameters of $G$ and $K$ are given. According to Table~3 in~\cite[Section~7.H]{Bss}, the sum $r+s$ can be greater than 2 only if $(\ggo,\kg)$ is one of the pairs
\begin{align*}
&(\sug(p,q),\sug(p)\oplus \sug(q) \oplus \RR), && 1<p \leq q, \\ &(\mathfrak{so}(4,m),\mathfrak{so}(4)\oplus \mathfrak{so}(m)), && m\ge4, \\ &(\mathfrak{so}(3,4),\mathfrak{so}(3)\oplus \mathfrak{so}(4)).
\end{align*}
It seems difficult to classify the critical points of~$S|_{\mathcal M_T^0}$ in these cases without using software, such as Maple, to solve the Euler--Lagrange equations numerically. Nevertheless, for all values of~$r+s$, the following result holds.

\begin{proposition}\label{prop_S=0}
If $g$ is a critical point of $S|_{\mathcal M_T^0}$, then $S(g)=0$.
\end{proposition}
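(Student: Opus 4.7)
The plan is to invoke the variational characterization (Proposition~\ref{variational_lemma}) and then take the $g$-trace of the resulting Ricci equation. Concretely, if $g$ is a critical point of $S|_{\mathcal M_T^0}$, then $\tr_g T = 0$, so the scaling factor $\tau$ in the proof of Proposition~\ref{variational_lemma} equals $1$; hence $g$ itself (not a rescaling) is critical, and the Lagrange multiplier argument in~\eqref{sc=Ric} yields a constant $c \in \mathbb R$ with
\begin{equation*}
\Ricci(g) = cT.
\end{equation*}

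Next, I would take the trace with respect to $g$ of both sides. Since the scalar curvature is by definition $S(g) = \tr_g \Ricci(g)$, this gives
\begin{equation*}
S(g) = \tr_g \Ricci(g) = c\,\tr_g T = c \cdot 0 = 0,
\end{equation*}
where the last equality uses the defining constraint $\tr_g T = 0$ of the manifold $\mathcal M_T^0$ (see~\eqref{MKT_def}). This completes the proof.

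There is essentially no obstacle here: the argument is a one-line consequence of the variational formulation together with the definition of $\mathcal M_T^0$. The only point requiring a sentence of justification is the passage from ``critical point of $S|_{\mathcal M_T^0}$'' to ``$\Ricci(g)=cT$ for some $c$'', which comes directly from~\eqref{sc=Ric}: the differential $dS_g = -g(\Ricci(g),\cdot)$ must annihilate every $h \in \tca$ with $g(T,h)=0$, and since $\Ricci(g)$ and $T$ both lie in $\tca$, this forces $\Ricci(g)$ to be a scalar multiple of $T$ (with the multiple possibly zero, in which case $S(g)=0$ trivially).
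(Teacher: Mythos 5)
Your argument is correct and coincides with the paper's own proof: both invoke Proposition~\ref{variational_lemma} to obtain $\Ricci(g)=cT$ and then take the $g$-trace, using $\tr_gT=0$ to conclude $S(g)=0$. The extra remarks you add (that $\tau=1$ so no rescaling is needed, and that $c=0$ is harmless) are accurate but not essential.
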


\begin{proof}
According to Proposition~\ref{variational_lemma}, the fact that $g$ is a critical point of $S|_{\mathcal M_T^0}$ implies equality~\eqref{prescribed}. Taking the trace on both sides of this equality, we obtain
\begin{align*}
S(g)=\tr_g\Ricci (g)=c\tr_gT=0.
\end{align*}
\end{proof}

Assume that $r+s=2$ in formula~\eqref{dec_k}. For the list of $\kg$ satisfying this assumption, see Table~3 in~\cite[Section~7.H]{Bss}. Equality~\eqref{T} becomes
\begin{equation}\label{T2}
T = -T_{\pg} Q|_{\pg} + T_1 Q|_{\kg_1} + T_2 Q|_{\kg_2}.  
\end{equation}
It will be convenient for us to denote
\begin{align*}
a = nd_2 (1-\kappa_2) T_{\pg},\qquad 
b &= d_1^2 (1-\kappa_1)T_1 -  d_2^2 (1-\kappa_2) T_2 + 2n^2 T_\pg -  \frac{n^2\kappa_1 T_{\pg}^2}{T_1},\\
c &=-2 n d_2 T_2 + \frac{2nd_2\kappa_1 T_\pg T_2}{T_1} -  nd_2 \kappa_2 T_\pg, \\ d&=- \frac{d_2^2 \kappa_1T_2^2}{T_1} + \kappa_2 d_2^2 T_2.
\end{align*}
The proof of Theorem~\ref{thm_M0} below shows that the variational properties of $S|_{\mathcal M_T^0}$ are largely determined by those of the polynomial
\begin{align*}
P(x)=ax^3+bx^2+cx+d.
\end{align*}
The discriminant of this polynomial is
\begin{align*}
D=18abcd-4b^3d+b^2c^2-4ac^3-27a^2d^2.
\end{align*}
Denote
\begin{align*}
R_t=-\frac b{3a},\qquad R_d= \frac{9ad-bc}{2(b^2-3ac)},\qquad R_s=\frac{4 a b c -9 a^2 d -b^3}{a (b^2 - 3 a c)}.
\end{align*}
According to the classical theory of cubic equations (see, e.g.,~\cite{Jan10} for a modern interpretation), if $D=0$ and $b^2=3ac$, then $x=R_t$ is a triple root of~$P(x)$. It is also a saddle point. If $D=0$ and $b^2\ne3ac$, then $x=R_d$ and $x=R_s$ are a double root and a simple root of $P(x)$, respectively. Both are local extremum points.

\begin{theorem}\label{thm_M0}
Assume that $r+s=2$ in formula~(\ref{dec_k}). The scalar curvature functional $S|_{\mca_T^0}$ does not have a global minimum. Critical points of other types exist under the following conditions:
\begin{enumerate}
\item
A saddle if and only if
\begin{align}\label{cond_saddle}
D=0,\qquad b^2=3ac,\qquad \frac{d_2 T_2}{n T_\pg}<R_t.
\end{align}

\item
A global maximum if and only if
\begin{align}\label{cond_gmx}
D=0,\qquad b^2\ne3ac,\qquad R_s\le\frac{d_2 T_2}{n T_\pg}<R_d.
\end{align}

\item
A local maximum that is not a global maximum if and only if
\begin{align}\label{cond_lmx}
D=0,\qquad b^2\ne3ac,\qquad \frac{d_2 T_2}{n T_\pg}<R_s<R_d.
\end{align}

\item
A local minimum if and only if
\begin{align}\label{cond_lmn}
D=0,\qquad b^2\ne3ac,\qquad \frac{d_2 T_2}{n T_\pg}<R_d<R_s.
\end{align}
\end{enumerate}
When it exists, the critical point of $S|_{\mca_T^0}$ is unique up to scaling.
\end{theorem}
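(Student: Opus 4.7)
The plan is to reduce the classification of critical points of $S|_{\mca_T^0}$ to the study of multiple roots of the cubic $P(z)$ and then carry out standard discriminant analysis. Starting from Proposition~\ref{variational_lemma}, I exploit the fact that $\mca_T^0$ is invariant under scaling (which is responsible for the ``up to scaling'' clause). Setting $u_i=\alpha_i/\beta$ isolates the scaling degree of freedom in~$\beta>0$, while the constraint $\tr_g T=0$ reduces to $nT_\pg = d_1 T_1/u_1 + d_2 T_2/u_2$ and involves only~$u_1,u_2$. Parametrizing the resulting curve by $z:=u_2$ yields $u_1 = d_1 T_1 z/(nT_\pg z - d_2 T_2)$, which is positive exactly when $z>z_b:=d_2 T_2/(nT_\pg)$. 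Corollary~\ref{scalar} then gives $S(g)=\tilde{S}(z)/\beta$ for a function $\tilde{S}$ depending on $z$ alone.

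In coordinates $(\beta,z)$ on $\mca_T^0$ one has $\partial_\beta S = -\tilde{S}(z)/\beta^2$ and $\partial_z S = \tilde{S}'(z)/\beta$, so $g$ is critical on $\mca_T^0$ if and only if $\tilde{S}(z)=0$ (in agreement with Proposition~\ref{prop_S=0}) and $\tilde{S}'(z)=0$. Substituting the formulas for $u_1$ and $1/u_1$ into $\tilde{S}$ and clearing denominators, a direct computation produces
\begin{align*}
\tilde{S}(z) = -\frac{P(z)}{4z(nT_\pg z - d_2 T_2)},
\end{align*}
with coefficients of $P$ matching exactly the $a,b,c,d$ defined before the theorem. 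Since the denominator is positive on the valid range, critical points of $S|_{\mca_T^0}$ correspond precisely to multiple roots of $P$ lying in $(z_b,\infty)$.

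Because $a=nd_2(1-\kappa_2)T_\pg>0$, $P$ is a genuine cubic, and classical theory tells us $P$ has a multiple root if and only if $D=0$, with a triple root at $R_t=-b/(3a)$ when additionally $b^2=3ac$, and a double root $R_d$ paired with a distinct simple root $R_s$ otherwise. At a triple root, $\tilde{S}(z)\sim -a(z-R_t)^3/[4R_t(nT_\pg R_t - d_2 T_2)]$ has an inflection, producing a degenerate saddle in $\mca_T^0$ (the $\beta$-direction is automatically flat since $\tilde{S}$ vanishes at the critical value of $z$). At a double root, a direct second-derivative computation gives
\begin{align*}
\tilde{S}''(R_d) = -\frac{a(R_d-R_s)}{2R_d(nT_\pg R_d - d_2 T_2)},
\end{align*}
which reveals a local maximum when $R_d>R_s$ and a local minimum when $R_d<R_s$. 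Combined with the validity constraints $R_t, R_d > z_b$, this yields conditions~(1), (3) and~(4).

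For the global maximum in~(2), $S\le 0$ throughout $\mca_T^0$ if and only if $P(z)=a(z-R_d)^2(z-R_s)\ge 0$ on $(z_b,\infty)$, which holds if and only if $R_s\le z_b$; combined with $R_d>z_b$ this is exactly condition~(2). Absence of a global minimum follows from $\tilde{S}(z)\sim -az/(4nT_\pg)\to -\infty$ as $z\to\infty$, together with $S\to -\infty$ under the rescaling $\beta\to 0^+$ whenever $\tilde{S}<0$. Uniqueness up to scaling is immediate, since $P$ has at most one multiple root and varying $\beta$ traces out the expected one-parameter family of critical metrics. The main obstacle in this plan is the algebraic bookkeeping in the expansion producing precisely the polynomial $P(z)$ with the stated coefficients $a,b,c,d$; once this is in hand, the classification via cubic discriminant theory proceeds routinely.
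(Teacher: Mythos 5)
Your proposal is correct and follows essentially the same route as the paper: the coordinates $(\beta,z)$ with $S=\tilde S(z)/\beta$ reproduce the paper's formula \eqref{eq_sc_2id} and its Lemma~\ref{lem_crit=root} (critical points of $S|_{\mca_T^0}$ correspond to multiple roots of $P$ in $(d_2T_2/(nT_\pg),\infty)$), and the subsequent case analysis via the cubic's discriminant, the sign of $P$ on the admissible interval, and the behaviour as $z\to\infty$ matches the paper's argument. The only cosmetic difference is that you classify the double-root cases by an explicit second-derivative formula where the paper reads off the local sign of $P$ directly; these are equivalent.
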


Let us make a few remarks in preparation for the proof. Consider a metric $g\in\mathcal M_T^0$. There are $\beta,\alpha_1,\alpha_2>0$ such that
\begin{equation}\label{g2}
g = \beta Q|_{\pg} + \alpha_1 Q|_{\kg_1} + \alpha_2 Q|_{\kg_2}.
\end{equation}
The equality $\tr_gT=0$ implies
\begin{align*}
-\frac{nT_{\pg}}{\beta}+\frac{d_1T_1}{\alpha_1}+\frac{d_2T_2}{\alpha_2}=0,\qquad \frac\beta{\alpha_1}=\frac1{d_1T_1}\bigg(nT_{\pg}-\frac{d_2T_2\beta}{\alpha_2}\bigg), \qquad \frac{\alpha_2}\beta>\frac{d_2T_2}{nT_{\pg}}.
\end{align*}
By Corollary~\ref{scalar},
\begin{align}\label{eq_sc_2id}
S(g) &= \frac1{4\beta}\Big(-\frac{\alpha_1}{\beta} d_1 (1-\kappa_1)-\frac{\alpha_2}{\beta} d_2 (1-\kappa_2) -  2n + \frac{\kappa_1 d_1\beta}{\alpha_1}+ \frac{\kappa_2 d_2\beta}{\alpha_2}\Big)\notag
\\
&=\frac\beta{4\alpha_2(d_2T_2\beta-nT_{\pg}\alpha_2)}P\Big(\frac{\alpha_2}\beta\Big).
\end{align}
Note that the factor in front of $P(\frac{\alpha_2}\beta)$ is necessarily negative. Our arguments will involve two curves, $\gamma_1$ and $\gamma_2$, in the space $\mathcal M_T^0$ given by the formulas
\begin{align*}
\gamma_1(t)&= \beta Q|_{\pg} - \frac{d_1T_1e^t\alpha_2\beta}{d_2T_2\beta-nT_{\pg}e^t\alpha_2} Q|_{\kg_1} + e^t\alpha_2 Q|_{\kg_2}, &&t>\ln\frac{d_2T_2\beta}{nT_{\pg}\alpha_2},
\\
\gamma_2(t)&=e^t\beta Q|_{\pg} + e^t\alpha_1 Q|_{\kg_1} + e^t\alpha_2 Q|_{\kg_2}, &&t\in\mathbb R.
\end{align*}
Both these curves pass through $g$ at $t=0$. 

\begin{lemma}\label{lem_crit=root}
The metric $g$ given by~\eqref{g2} is a critical point of $S|_{\mathcal M_T^0}$ if and only if $x=\frac{\alpha_2}\beta$ is a multiple root of~$P(x)$.
\end{lemma}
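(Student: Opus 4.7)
My plan is to exploit the two curves $\gamma_1$ and $\gamma_2$ introduced just before the lemma. Because $\mathcal{M}_K$ is three-dimensional in the coordinates $(\beta, \alpha_1, \alpha_2)$ and $\mathcal{M}_T^0$ is the codimension-one hypersurface $\tr_g T = 0$, the tangent space $T_g \mathcal{M}_T^0$ is two-dimensional. The first step is to compute $\gamma_2'(0) = \beta Q_{\pg} + \alpha_1 Q_1 + \alpha_2 Q_2$ and observe that $\gamma_1'(0)$ has vanishing $Q_{\pg}$-component (since $\gamma_1$ keeps the $\pg$-coefficient frozen at $\beta$), so the two tangent vectors are linearly independent in $\tca$. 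Since both curves lie inside $\mathcal{M}_T^0$ by construction, their tangents span $T_g \mathcal{M}_T^0$. Consequently, $g$ is a critical point of $S|_{\mathcal{M}_T^0}$ if and only if $\tfrac{d}{dt}S(\gamma_i(t))|_{t=0} = 0$ for $i = 1, 2$.

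For $\gamma_2(t) = e^t g$ I will use the scaling identity $S(e^t g) = e^{-t} S(g)$, which gives $\tfrac{d}{dt}S(\gamma_2)|_{t=0} = -S(g)$; by \eqref{eq_sc_2id}, $S(g)$ is a nonzero multiple of $P(x)$ with $x = \alpha_2/\beta$, so this derivative vanishes precisely when $P(x) = 0$. (Equivalently, Proposition~\ref{prop_S=0} already forces $S(g)=0$ at any critical point.) For $\gamma_1$ the key observation is that $\alpha_2(t)/\beta = e^t x$, so \eqref{eq_sc_2id} becomes
\begin{equation*}
S(\gamma_1(t)) = \frac{P(e^t x)}{4\beta (e^t x)(d_2 T_2 - n T_{\pg} e^t x)}.
\end{equation*}
Assuming $P(x) = 0$, the contribution of the derivative of the denominator drops out and $\tfrac{d}{dt}S(\gamma_1)|_{t=0}$ simplifies to $\tfrac{P'(x)}{4\beta(d_2 T_2 - n T_{\pg} x)}$. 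The denominator here is nonzero: the constraint $\tr_g T = 0$ combined with $\alpha_1 > 0$ forces $n T_{\pg} \alpha_2 > d_2 T_2 \beta$, i.e.\ $d_2 T_2 - n T_\pg x < 0$. Hence this second criticality condition is equivalent to $P'(x) = 0$.

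Combining the two conditions, $g$ is a critical point of $S|_{\mathcal{M}_T^0}$ if and only if $P(x) = 0$ and $P'(x) = 0$, which is the definition of $x$ being a multiple root of $P$. The main obstacle, such as it is, is organisational: one must check that $\gamma_1$ and $\gamma_2$ do give a basis of $T_g \mathcal{M}_T^0$, and that the denominator appearing in the $\gamma_1$ computation is nonvanishing. Both points follow from the defining formulas of the curves and elementary manipulation of the constraint $\tr_g T = 0$, so the bulk of the proof is simply reading off the zero-order and first-order conditions of the factorised form of $S$ in \eqref{eq_sc_2id}.
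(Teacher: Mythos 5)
Your proposal is correct and follows essentially the same route as the paper: both arguments reduce criticality to the vanishing of the derivatives of $S$ along the curves $\gamma_1$ and $\gamma_2$, use the scaling identity $S(e^tg)=e^{-t}S(g)$ to show the $\gamma_2$-condition is $P(x)=0$, and then read off $P'(x)=0$ from the $\gamma_1$-derivative once $P(x)=0$ is known. The only cosmetic difference is that you obtain $S(g)=0$ from the $\gamma_2$-derivative rather than from Proposition~\ref{prop_S=0}, which is an equivalent and equally valid step.
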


\begin{proof}
Assume $g$ is a critical point of $ S|_{\mathcal M_T^0}$. Proposition~\ref{prop_S=0} implies $S(g)=0$. In light of~\eqref{eq_sc_2id}, this means $x=\frac{\alpha_2}\beta$ must be a root of~$P(x)$. Furthermore, because $g$ is a critical point of $S|_{\mca_T^0}$,
\begin{align*}
0&=\frac{d}{dt}S(\gamma_1(t))|_{t=0}=\frac d{dt}\Big(\frac\beta{4e^t\alpha_2(d_2T_2\beta-nT_{\pg}e^t\alpha_2)}P\Big(\frac{e^t\alpha_2}\beta\Big)\Big)\Big|_{t=0}
\\ 
&=\frac d{dt}\frac\beta{4e^t\alpha_2(d_2T_2\beta-nT_{\pg}e^t\alpha_2)}\Big|_{t=0}P\Big(\frac{\alpha_2}\beta\Big) + \frac\beta{4 \alpha_2(d_2T_2\beta-nT_{\pg}\alpha_2)} \frac d{dt}P\Big(\frac{e^t\alpha_2}\beta\Big)\Big|_{t=0}\\
& = \frac1{4 (d_2T_2\beta-nT_{\pg}\alpha_2)}\frac{d}{dx}P(x)|_{x=\frac{\alpha_2}\beta}.
\end{align*}
Thus, the derivative of $P(x)$ at $x=\frac{\alpha_2}\beta$ vanishes. This proves the ``only if" part of the claim.

Assume that $x=\frac{\alpha_2}\beta$ is a multiple root of $P(x)$. We need to show that $g$ is a critical point of~$S|_{\mathcal M_T^0}$. Clearly, the vectors tangent to the curves $\gamma_1$ and $\gamma_2$ at $g$ are linearly independent. Therefore, it suffices to prove that
\begin{align*}
\frac{d}{dt}S(\gamma_1(t))|_{t=0}=\frac{d}{dt}S(\gamma_2(t))|_{t=0}=0.
\end{align*}
Computing as above, we find
\begin{align*}
\frac{d}{dt}&S(\gamma_1(t))|_{t=0} \\ &=\frac d{dt}\frac\beta{4e^t\alpha_2(d_2T_2\beta-nT_{\pg}e^t\alpha_2)}\Big|_{t=0}P\Big(\frac{\alpha_2}\beta\Big) + \frac1{4(d_2T_2\beta-nT_{\pg}\alpha_2)}\frac{d}{dx}P(x)|_{x=\frac{\alpha_2}\beta} =0.
\end{align*}
Formula~\eqref{eq_sc_2id} implies
\begin{align*}
\frac{d}{dt}S(\gamma_2(t))|_{t=0}=\frac{d}{dt}S(e^tg)|_{t=0}=\frac{d}{dt}e^{-t}S(g)|_{t=0}=-\frac{\beta}{4\alpha_2(d_2T_2\beta-nT_{\pg}\alpha_2)}P\Big(\frac{\alpha_2}\beta\Big)=0.
\end{align*}
\end{proof}

\begin{proof}[Proof of Theorem~\ref{thm_M0}]
Recalling that $P(x)$ is a cubic polynomial, we find
\begin{align*}
\lim_{t\to\infty}S(\gamma_1(t))&=\lim_{t\to\infty}\frac\beta{4e^t\alpha_2(d_2T_2\beta-nT_{\pg}e^t\alpha_2)}P\Big(\frac{e^t\alpha_2}\beta\Big)=-\infty.
\end{align*}
Consequently, $S|_{\mathcal M_T^0}$ never attains its global minimum. This proves the first statement.

Suppose $g$ is a saddle point of $S|_{\mathcal M_T^0}$. Proposition~\ref{prop_S=0} implies $S(g)=0$. Moreover, every neighbourhood of $g$ in $\mathcal M_T^0$ contains a metric with negative scalar curvature and one with positive scalar curvature. By Lemma~\ref{lem_crit=root}, $x=\frac{\alpha_2}\beta$ is a multiple root of~$P(x)$. Formula~\eqref{eq_sc_2id} shows that every interval around $x=\frac{\alpha_2}\beta$ contains a point where $P(x)$ is positive and one where $P(x)$ is negative. This is only possible if $x=\frac{\alpha_2}\beta$ is a triple root. By the classical theory of cubic equations, conditions~\eqref{cond_saddle} hold. Conversely, these conditions ensure that $P(x)$ has a triple root at $x=R_t$. Consider a metric $g_{\mathrm{sdl}}\in\mathcal M_T^0$ defined by
\begin{equation*}
g_{\mathrm{sdl}} = Q|_{\pg}-\frac{d_1T_1R_t}{d_2T_2-nT_{\pg}R_t} Q|_{\kg_1} + R_t Q|_{\kg_2}.
\end{equation*}
Lemma~\ref{lem_crit=root} implies that $g_{\mathrm{sdl}}$ is a critical point of~$S|_{\mathcal M_T^0}$. Using~\eqref{eq_sc_2id}, one easily shows that every neighbourhood of $g_{\mathrm{sdl}}$ contains a metric with negative scalar curvature and one with positive scalar curvature. In light of Proposition~\ref{prop_S=0}, this means $g_{\mathrm{sdl}}$ is a saddle point.

The functional $S|_{\mathcal M_T^0}$ attains its global maximum if and only if $S(g_{\mathrm{gmx}})=0$ for some $g_{\mathrm{gmx}}\in\mathcal M_T^0$ and $S(h)\le0$ for all $h\in\mathcal M_T^0$. Formula~\eqref{eq_sc_2id} implies that this happens if and only if $P(x)$ has a double root in the interval $\big(\frac{d_2 T_2}{n T_\pg},\infty\big)$ and is nonnegative on this interval. Conditions~\eqref{cond_gmx} are necessary and sufficient for $P(x)$ to have such properties.

Next, $S|_{\mathcal M_T^0}$ has a local maximum that is not a global maximum if and only if $S(g_{\mathrm{lmx}})=0$ for some $g_{\mathrm{lmx}}\in\mathcal M_T^0$, $S(h)\le0$ for all $h$ in a neighbourhood of $g_{\mathrm{lmx}}$, and the scalar curvature of at least one metric in $\mathcal M_T^0$ is positive. This is equivalent to $P(x)$ having a simple root in the interval $\big(\frac{d_2 T_2}{n T_\pg},\infty\big)$ and a double root in $(R_s,\infty)$. Conditions~\eqref{cond_lmx} are necessary and sufficient for $P(x)$ to have such properties.

Analogously, $S|_{\mathcal M_T^0}$ has a local minimum if and only if $P(x)$ has a double root in $\big(\frac{d_2 T_2}{n T_\pg},\infty\big)$ and is nonpositive in a neighbourhood of this root. Conditions~\eqref{cond_lmn} are necessary and sufficient for this.

Finally, in view of Lemma~\ref{lem_crit=root}, $S|_{\mathcal M_T^0}$ can have at most one critical point up to scaling since a cubic polynomial can have at most one multiple root.
\end{proof}

\subsection{Summary}\label{sec_sum}

The results of Sections~\ref{sec_variational}--\ref{sec_0} enable us to make several conclusions about the solvability of~\eqref{prescribed}. We summarise these conclusions in Theorem~\ref{thm_summary} below. The constant $c$ in~\eqref{prescribed} must be positive if $T$ satisfies~\eqref{T}. This is an immediate consequence of the formulas for the Ricci curvature obtained in Section~\ref{secRicci}. 

\begin{theorem}\label{thm_summary}
Suppose $T$ is a left-invariant (0,2)-tensor field on $G$ given by~(\ref{T}).
\begin{enumerate}
\item
If~(\ref{hyp_thm_beta_zero+}) holds, then there exists at least one pair $(g,c)\in\mathcal M_K\times (0,\infty)$ satisfying~(\ref{prescribed}).

\item
If both~(\ref{hyp_thm_beta_inf+}) and~(\ref{hyp_thm_beta_zero+}) hold, then there are at least two pairs $(g,c)\in\mathcal M_K\times (0,\infty)$ that satisfy~(\ref{prescribed}) and have non-homothetic metrics~$g$.

\item
If $r+s=2$ and conditions~(\ref{cond_saddle}), (\ref{cond_gmx}), (\ref{cond_lmx}) or~(\ref{cond_lmn}) hold, then there exists at least one pair $(g,c)\in\mathcal M_K\times (0,\infty)$ satisfying~(\ref{prescribed}).

\end{enumerate}
\end{theorem}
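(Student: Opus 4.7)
My plan is to assemble Theorem~\ref{thm_summary} as a direct consequence of the variational machinery already set up in Sections~\ref{sec_variational}--\ref{sec_0}. For part~(1), I would invoke Theorem~\ref{sufcon-}: under hypothesis~\eqref{hyp_thm_beta_zero+}, the functional $S|_{\mathcal{M}^{-}_{T}}$ attains a global maximum at some $g_{-}\in\mathcal{M}^{-}_{T}$. Being a critical point, this metric corresponds via Proposition~\ref{variational_lemma} to a constant $c\in\mathbb R$ with $\Ricci(g_{-})=cT$. It remains only to check that $c>0$, and this is immediate from Theorem~\ref{Ric}: for any $g\in\mca_K$ the Ricci curvature has a strictly negative coefficient along $Q|_{\pg}$ and strictly positive coefficients along each $Q|_{\kg_j}$, which matches the sign pattern of $T$ in~\eqref{T} precisely when $c>0$.

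For part~(2), I would additionally invoke Theorem~\ref{sufcon+} under the joint assumptions~\eqref{hyp_thm_beta_inf+} and~\eqref{hyp_thm_beta_zero+} to obtain a global maximum $g_{+}\in\mathcal{M}^{+}_{T}$ of $S|_{\mathcal{M}^{+}_{T}}$, yielding a second solution $(g_{+},c_{+})$ of~\eqref{prescribed} by the same argument as in part~(1). The essential new step is to verify that $g_{+}$ and $g_{-}$ are not homothetic. If $g_{+}=\lambda g_{-}$ for some $\lambda>0$, then
\begin{align*}
1=\tr_{g_{+}}T=\lambda^{-1}\tr_{g_{-}}T=-\lambda^{-1},
\end{align*}
which is impossible; hence the two pairs are genuinely distinct, in line with the phenomenon described in Remark~\ref{rem_2c_Tim}.

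For part~(3), with $r+s=2$, any of the conditions~\eqref{cond_saddle}--\eqref{cond_lmn} supplies a critical point $g_{0}$ of $S|_{\mathcal{M}^{0}_{T}}$ via Theorem~\ref{thm_M0}. Applying Proposition~\ref{variational_lemma} once more, together with the same sign analysis used in part~(1), produces a pair $(g_{0},c)\in\mca_K\times(0,\infty)$ satisfying~\eqref{prescribed}.

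I do not expect a substantive obstacle here, since the theorem is essentially a packaging of the results of Sections~\ref{sec_variational}--\ref{sec_0}. The only subtle points to be careful about are the positivity of the Lagrange multiplier $c$, which is forced by Theorem~\ref{Ric} and the sign structure of $T$, and the non-homothety assertion in part~(2), which is handled by the short trace computation above.
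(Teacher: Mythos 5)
Your proposal is correct and follows essentially the same route as the paper: parts~(1) and~(3) are obtained from Theorem~\ref{sufcon-} and Theorem~\ref{thm_M0} via Proposition~\ref{variational_lemma}, and part~(2) combines Theorems~\ref{sufcon+} and~\ref{sufcon-}, with non-homothety forced by the opposite signs of $\tr_{g}T$ on $\mathcal M_T^+$ and $\mathcal M_T^-$. Your explicit checks of the positivity of $c$ (via Theorem~\ref{Ric}) and the trace computation ruling out $g_+=\lambda g_-$ are exactly the observations the paper relies on.
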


\begin{proof}
Statements~1 and~3 follow from Theorems~\ref{sufcon-} and~\ref{thm_M0} combined with Proposition~\ref{variational_lemma}. Next, assume that~(\ref{hyp_thm_beta_inf+}) and~(\ref{hyp_thm_beta_zero+}) hold. According to Theorems~\ref{sufcon+} and~\ref{sufcon-}, the functionals $S|_{\mathcal M_T^+}$ and $S|_{\mathcal M_T^-}$ attain their global maxima at some $g_1\in\mathcal M_T^+$ and $g_2\in\mathcal M_T^-$. Proposition~\ref{variational_lemma} implies that both $g_1$ and $g_2$ have Ricci curvature equal to $T$ up to scaling. These metrics cannot be homothetic because $\tr_{g_1}T$ and $\tr_{g_2}T$ are not of the same sign.
\end{proof}

When $r+s=1$, Theorem~\ref{thm_summary} is essentially optimal. We explain this in detail in Remark~\ref{rem_optimal}. At the same time, when $r+s\ge2$, it seems that~(\ref{hyp_thm_beta_inf+}) and~\eqref{hyp_thm_beta_zero+} may fail to hold even if $S|_{\mca_T^+}$ and $S|_{\mca_T^-}$ attain their global maxima. Indeed, on compact Lie groups, inequalities that are similar in spirit to these provide merely a ``linear approximation" to the necessary and sufficient conditions for the existence of a critical point; see~\cite[Section~5]{APZ20}.

Different pairs $(g,c)\in\mathcal M_K\times(0,\infty)$ satisfying~\eqref{prescribed} must have distinct~$c$. More precisely, by Theorem~\ref{thm_no_c}, if
\begin{align*}
\Ricci (g_1)=\Ricci(g_2)=cT,\qquad g_1,g_2\in\mathcal M_K,
\end{align*}
then $g_1$ and $g_2$ are equal up to scaling.

\begin{remark}\label{rem_r+s=1}
The discussion at the beginning of Section~\ref{sec_0} shows that~\eqref{cr_pt_M0_1} is a sufficient condition for the solvability of~\eqref{prescribed} if $r+s=1$.

\end{remark}

\section{The case where $K$ is simple}\label{sec_simple}

As above, let $T$ be a left-invariant (0,2)-tensor field on~$G$. Assume that $K$ is simple. Our next result settles the question of solvability of~\eqref{prescribed} under this assumption. We do not use the variational approach developed in Section~\ref{sec_variational}; however, see Remarks~\ref{rem_optimal} and~\ref{rem_sim_max} below.

Since $K$ is simple, the numbers $r$ and $s$ in~\eqref{dec_k} equal~1 and~0, respectively. By Theorem~\ref{Ric}, if~\eqref{T1} holds for some $T_\pg,T_1>0$, the constant $c$ in~\eqref{prescribed} must be positive.

\begin{proposition}\label{prop_simple_gr}
Assume $K$ is simple. Let the tensor field $T$ satisfy~(\ref{T1}) for some $T_{\pg},T_1>0$.
\begin{enumerate}
\item
If
\begin{align}\label{sim_con1}
2nT_{\pg}(2  T_1-  \kappa_1T_{\pg}) < -d_1T_1^2
\end{align}
then there exists no metric $g\in\mathcal M_K$ such that~(\ref{prescribed}) holds.

\item
If
\begin{align*}
2n T_{\pg}(2  T_1-  \kappa_1T_{\pg}) = -d_1T_1^2\qquad
or\qquad 
2T_1- \kappa_1T_{\pg}  \ge 0,
\end{align*}
then there exists precisely one pair $(g,c)\in\mathcal M_K\times(0,\infty)$, up to scaling of $g$, such that~(\ref{prescribed}) holds.

\item
If
\begin{align*}
-d_1T_1^2 < 2n T_{\pg}(2 T_1- \kappa_1T_{\pg}) <0,
\end{align*}
then there are precisely two pairs $(g,c)\in\mathcal M_K\times(0,\infty)$, up to scaling of $g$, such that~(\ref{prescribed}) holds.
\end{enumerate}
\end{proposition}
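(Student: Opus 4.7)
The plan is to reduce the equation $\Ricci(g) = cT$ with $g = \beta Q|_\pg + \alpha_1 Q|_{\kg_1} \in \mca_K$ to a single quadratic in the ratio $x = \alpha_1/\beta > 0$, and then read off the three assertions from the signs of the discriminant and the product of roots. First I would apply Theorem~\ref{Ric} in the case $r=1$, $s=0$ and use the identity $2d_1(1-\kappa_1)=n$ from~\eqref{sum_dkappan} to rewrite $\Ricci(g) = cT$ as the pair of scalar equations
\begin{align*}
\frac{\alpha_1 + 2\beta}{4\beta} = cT_\pg, \qquad \frac{\alpha_1^2(1-\kappa_1)}{4\beta^2} + \frac{\kappa_1}{4} = cT_1.
\end{align*}
Eliminating $c$ from the first equation (which forces $c>0$ automatically as soon as $x>0$) yields the quadratic
\begin{align*}
F(x)\;:=\;T_\pg(1-\kappa_1)\,x^2 - T_1\,x + (T_\pg\kappa_1 - 2T_1) \;=\; 0.
\end{align*}
Conversely, every positive root $x$ of $F$ determines a pair $(g,c)\in\mca_K\times(0,\infty)$ uniquely up to the scaling $(\beta,\alpha_1)\mapsto(\lambda\beta,\lambda\alpha_1)$, with $c=(x+2)/(4T_\pg)$. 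Distinct positive roots give pairs with distinct values of $c$, hence metrics that are not homothetic. Therefore the problem reduces to counting positive roots of~$F$.

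Next I would compute the discriminant. Using $1-\kappa_1 = n/(2d_1)$, a direct expansion gives
\begin{align*}
\Delta \;=\; T_1^2 - 4T_\pg(1-\kappa_1)(T_\pg\kappa_1 - 2T_1) \;=\; \tfrac{1}{d_1}\bigl(d_1 T_1^2 + 2nT_\pg(2T_1 - \kappa_1 T_\pg)\bigr),
\end{align*}
while Vieta's formulas give sum of roots $T_1/(T_\pg(1-\kappa_1))>0$ and product of roots $(T_\pg\kappa_1 - 2T_1)/(T_\pg(1-\kappa_1))$, which has the sign of $-(2T_1-\kappa_1 T_\pg)$.

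The three statements then follow by case analysis. Under~\eqref{sim_con1}, the computation gives $\Delta<0$, so $F$ has no real roots and hence no solution exists, proving~(1). If $2nT_\pg(2T_1-\kappa_1 T_\pg) = -d_1T_1^2$, then $\Delta=0$, which forces $2T_1-\kappa_1T_\pg<0$ (since $-d_1T_1^2<0$), so the unique real root equals $T_1/(2T_\pg(1-\kappa_1))>0$ and we get exactly one solution. If instead $2T_1-\kappa_1 T_\pg\ge 0$, then $\Delta\ge T_1^2>0$ and the product of roots is nonpositive, so one root is positive and the other is nonpositive; again exactly one positive root. This handles~(2). Finally, under the hypothesis of~(3), the inequality $2nT_\pg(2T_1-\kappa_1 T_\pg)>-d_1T_1^2$ yields $\Delta>0$, while $2T_1-\kappa_1 T_\pg<0$ makes the product of roots strictly positive; combined with the positive sum of roots, this gives two distinct positive roots, hence two non-homothetic solutions.

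The calculation is elementary once the right variable $x=\alpha_1/\beta$ is introduced; the main thing to be careful about is the bookkeeping between the three quantities $\Delta$, $T_\pg\kappa_1 - 2T_1$, and the proposition's combined expression $2nT_\pg(2T_1-\kappa_1T_\pg)+d_1T_1^2$, as well as the borderline case $2T_1-\kappa_1T_\pg=0$ where one root of $F$ degenerates to zero and must be discarded.
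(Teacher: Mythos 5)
Your proposal is correct and follows essentially the same route as the paper: reduce $\Ricci(g)=cT$ via Theorem~\ref{Ric} and \eqref{sum_dkappan} to the quadratic $(1-\kappa_1)T_{\pg}x^2-T_1x+\kappa_1T_{\pg}-2T_1=0$ in $x=\alpha_1/\beta$ and count positive roots through the discriminant. Your Vieta's-formula bookkeeping (including discarding the zero root when $2T_1=\kappa_1T_\pg$) just makes explicit what the paper leaves as a one-line root count.
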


\begin{proof}
Choose a metric $g\in\mathcal M_K$. There exist $\beta,\alpha_1>0$ such that
\begin{equation*}
g = \beta Q|_{\pg} + \alpha_1 Q|_{\kg_1}.
\end{equation*}
Theorem~\ref{Ric} and formula~\eqref{sum_dkappan} imply that $g$ satisfies~\eqref{prescribed} if and only if
\begin{align*}
\unc(\kappa_1(1 - x^2) + x^2) &= c T_1, \\
\unc(2+x) &= c T_{\pg},
\end{align*}
where $x=\frac{\alpha_1}{\beta}$. Clearing $c$ from the second line and substituting into the first, we obtain
\begin{align*}
(1-\kappa_1) T_{\pg} x^2 - T_1 x + \kappa_1 T_{\pg} - 2T_1 =0. 
\end{align*}
This is a quadratic equation with discriminant
\begin{align*}
E= T_1^2 + 4  (1 - \kappa_1) T_{\pg} (2T_1- \kappa_1 T_{\pg}).
\end{align*}
It has no solutions if $E<0$, precisely one positive solution if $E=0$ or $E\ge T_1^2$, and precisely two positive solutions if $0<E<T_1^2$. Together with~\eqref{sum_dkappan}, this implies the result.
\end{proof}

\begin{remark}\label{rem_optimal}
Proposition~\ref{prop_simple_gr} shows that Theorem~\ref{thm_summary} is essentially optimal in our current setting. Indeed, since $K$ is simple,~\eqref{hyp_thm_beta_zero+} becomes
\begin{align*}
\frac{\kappa_1n^2T_{\mathfrak p}^2- (1-\kappa_1)d_1^2T_1^2}{nT_{\mathfrak p}T_1}-2n<0.
\end{align*}
In view of~\eqref{sum_dkappan}, this is equivalent to
\begin{align*}
2nT_{\mathfrak p}(2T_1-\kappa_1T_{\mathfrak p})>- d_1T_1^2.
\end{align*}
Theorem~\ref{thm_summary} and Remark~\ref{rem_r+s=1} assert that~\eqref{hyp_thm_beta_zero+} and~\eqref{cr_pt_M0_1} are sufficient conditions for the solvability of~\eqref{prescribed}. Conversely, as Proposition~\ref{prop_simple_gr} shows, the existence of a pair $(g,c)\in\mathcal M_K\times(0,\infty)$ satisfying~\eqref{prescribed} implies that either~\eqref{hyp_thm_beta_zero+} or~\eqref{cr_pt_M0_1} must hold. Theorem~\ref{thm_summary} provides lower bounds on the number of solutions to~\eqref{prescribed}. Using Proposition~\ref{prop_simple_gr}, one can easily demonstrate that these bounds are sharp.
\end{remark}

\begin{remark}\label{rem_sim_max}
In our current setting, every metric satisfying~\eqref{prescribed} is, up to scaling, a global maximum point of $S|_{\mca_T^+}$, $S|_{\mca_T^-}$ or $S|_{\mca_T^0}$. This observation follows from the results of Section~\ref{sec_Ricci=cT} and Proposition~\ref{prop_simple_gr}.
\end{remark}

\section{Examples}\label{sec_examples}

Let us illustrate how the results of Sections~\ref{sec_Ricci=cT}--\ref{sec_simple} apply to specific groups.

\begin{example}\label{exa_simple}
Assume $G=\G_2^{\mathbb C}$ and $K=\G_2$. Then
\begin{align*}
r=1,\qquad s=0,\qquad n=d_1=14.
\end{align*}
Formula~\eqref{sum_dkappan} yields $\kappa_1=\unm$. Suppose $T$ is given by~\eqref{T1} with~$T_\pg,T_1>0$. Since $K$ is simple, Proposition~\ref{prop_simple_gr} applies. Formula~\eqref{sim_con1} becomes
\begin{align*}
4T_{\pg}T_1-T_{\pg}^2<-T_1^2.
\end{align*}
Equivalently,
\begin{align*}
\frac{T_1}{T_\pg}<(\sqrt5-2).
\end{align*}
If this holds,
then~\eqref{prescribed} has no solutions. Similarly, if
\begin{align*}
\frac{T_1}{T_\pg}=(\sqrt5-2)\qquad
\mbox{or}\qquad 
\frac{T_1}{T_\pg}\ge\unc,
\end{align*}
then there is one pair $(g,c)\in\mathcal M_K\times(0,\infty)$, up to scaling of $g$, that satisfies~(\ref{prescribed}). If
\begin{align*}
(\sqrt5-2)<\frac{T_1}{T_\pg}<\unc,
\end{align*}
there are two such pairs.
\end{example}

\begin{example}\label{ex_2ideals}
Assume $G=\SO^+(2,q)$ and $K=\SO(2)\times\SO(q)$ with $q\ge5$. Then
\begin{align*}
r=1,\qquad s=1,\qquad n=2q,\qquad d_1=\frac{q(q-1)}2,\qquad d_2=1.
\end{align*}
Using~\eqref{sum_dkappan}, we find
\begin{align*}
\kappa_1=\frac{q-2}q,\qquad\kappa_2=0.
\end{align*}
Suppose $T$ is given by~\eqref{T2} with~$T_\pg,T_1,T_2>0$. Inequality~\eqref{hyp_thm_beta_zero+} becomes
\begin{align}\label{exam_2id}
8q(q-2)T_{\mathfrak p}^2- q(q-1)^2T_1^2-2T_1T_2-16q^2T_\pg T_1<0.
\end{align}
According to Theorem~\ref{sufcon-}, if~\eqref{exam_2id} holds, then $S|_{\mca_T^-}$ attains its global maximum. In this case, there exists a metric~$g\in\mca_T^-$ with Ricci curvature~$cT$ for some~$c>0$. Inequality~\eqref{hyp_thm_beta_inf+} takes the form
\begin{align}\label{exam2plus}
-2q(q-2)T_\pg+q(4q+1)T_1+(q-2)T_2<0.
\end{align}
The set of triples~$(T_\pg,T_1,T_2)$ for which both~\eqref{exam_2id} and~\eqref{exam2plus} are satisfied is non-empty and open in~$\mathbb R^3$. We depict it in Figure~1 for $q=10$. We also indicate where~\eqref{exam_2id} holds without~\eqref{exam2plus}. Because these inequalities are invariant under scaling of~$(T_\pg,T_1,T_2)$, we make our sketch assuming~$T_\pg=1$. By Theorem~\ref{sufcon+}, if both~\eqref{exam_2id} and~\eqref{exam2plus} are satisfied, then $S|_{\mca_T^+}$ attains its global maximum. In this case, there exists a metric $g\in\mca_T^+$ with Ricci curvature $cT$ for some~$c>0$.

\begin{figure}[h]
\includegraphics[width=1\textwidth]{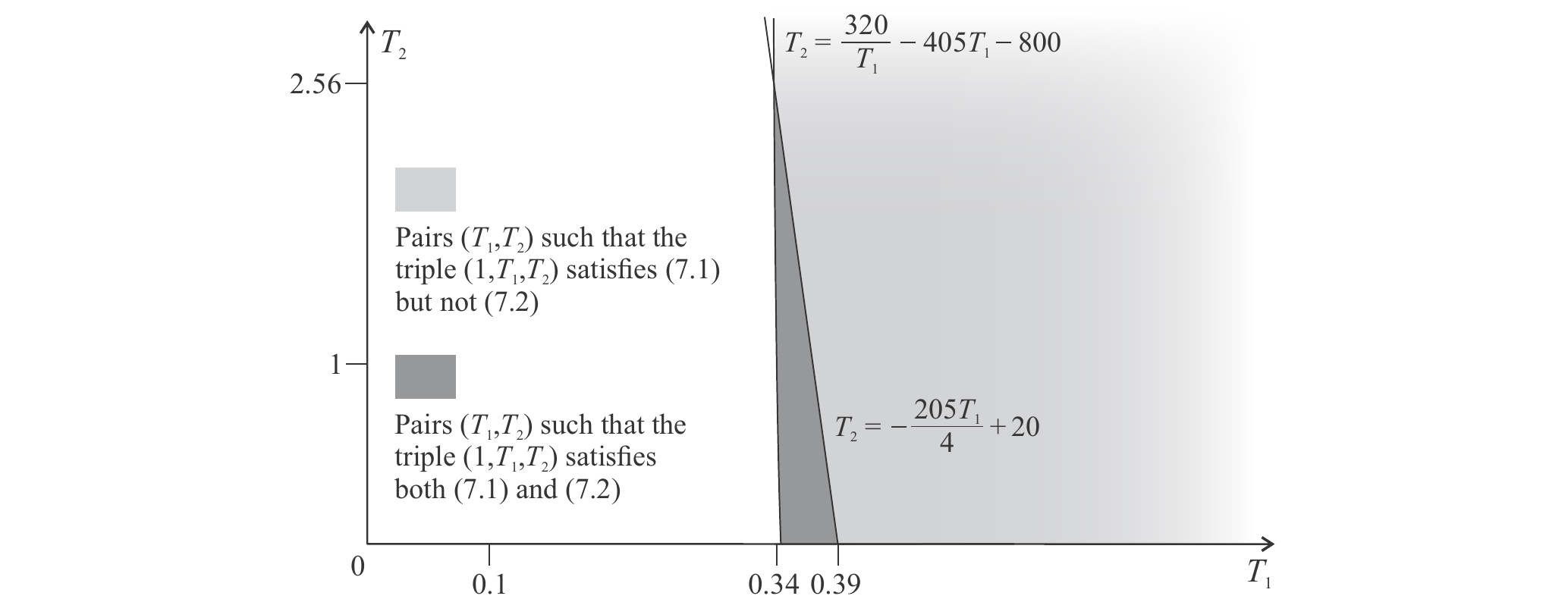}
\caption{\small{Existence of global maxima of $S|_{\mca_T^-}$ and $S|_{\mca_T^+}$ on $\SO^+(2,10)$}}
\end{figure}

Theorem~\ref{thm_M0} enables us to classify the critical points of~$S|_{\mca_T^0}$. For instance, suppose $q=5$ and $(T_\pg,T_1,T_2)=(1,1,1)$. Then the discriminant $D$ of the polynomial $P(x)$ satisfies
\begin{align*}
D=\tfrac{79948008}5\ne0. 
\end{align*}
By Theorem~\ref{thm_M0}, $S|_{\mca_T^0}$ has no critical points. To give another example, suppose $q=5$ and $(T_\pg,T_1,T_2)=\big(1,\tfrac15,\eta\big)$, where $\eta$ is the unique positive root of the polynomial
\begin{align*}
R(x)= -12x^3-4412x^2-583104x+4198144.
\end{align*}
Then $D=\eta^2R(\eta)=0$. Moreover, using an approximate value of $\eta$ calculated in Maple, we find $b^2\ne3ac$ and
\begin{align*}
R_s=\frac{\eta^3+1376\eta^2-121808\eta+778688}{10\eta^2-10160\eta+84640}<\frac{\eta}{10}<\frac{115\eta(16-\eta)}{\eta^2-1016\eta+8464}=R_d.
\end{align*}
This means $S|_{\mca_T^0}$ attains its global maximum.

One can use Maple to produce the graphs of $S|_{\mca_T^-}$, $S|_{\mca_T^+}$ and~$S|_{\mca_T^0}$; cf.~\cite[Section~5]{APZ20}.
\end{example}

\begin{example}
Assume $G=\SU(p,q)$ and $K=\SU(p)\times\U(q)$ with $2\le p\le q$. Then
\begin{align*}
r=2,\qquad s=1,\qquad n=2pq,\qquad d_1=p^2-1,\qquad d_2=q^2-1,\qquad d_3=1.
\end{align*}
As we showed in Example~\ref{exa_kappa},
\begin{align*}
\kappa_1=\frac p{p+q},\qquad\kappa_2=\frac q{p+q},\qquad\kappa_3=0.
\end{align*}
Suppose 
\begin{align*}
T = - T_{\pg} Q|_{\pg} + T_1 Q|_{\kg_1}+ T_1 Q|_{\kg_3}+ T_3 Q|_{\kg_3} 
\end{align*}
for some~$T_\pg,T_1,T_2,T_3>0$. Inequality~\eqref{hyp_thm_beta_zero+} becomes
\begin{align}\label{exam_3id}
%
%
4p^3q^2T_{\mathfrak p}^2T_2&-(p^2-1)^2qT_1^2T_2+4p^2q^3T_{\mathfrak p}^2T_1 \notag
\\
&-(q^2-1)^2pT_1T_2^2-(p+q)T_1T_2T_3-8p^2q^2(p+q)T_\pg T_1T_2<0.
\end{align}
By Theorem~\ref{sufcon-}, if~\eqref{exam_3id} holds, then $S|_{\mca_T^-}$ attains its global maximum. This is the case, e.g., when 
\begin{align*}
(T_\pg,T_1,T_2,T_3)=(1,1,1,4p^2q^2).
\end{align*}
Assume for simplicity that $T_1\ge T_2$. Then~\eqref{hyp_thm_beta_inf+} takes the form
\begin{align}\label{exam_3max}
-2pq^2T_\pg+q(p^2-1)T_1+qT_3 &<(p^3-5p^2q-4pq^2-2p)T_2.
\end{align}
By Theorem~\ref{sufcon-}, if both~\eqref{exam_3id} and this inequality hold, then $S|_{\mca_T^+}$ attains its global maximum. This happens, e.g., for $(p,q)=(2,12)$ and $(T_\pg,T_1,T_2,T_3)=\big(1,1,\tfrac38,1\big)$.
\end{example}

\section*{Acknowledgements}

The authors are grateful to Jorge Lauret and Cynthia Will for their careful reading of the paper and useful comments.

\end{document}